\DeclareMathAlphabet{\mathpzc}{OT1}{pzc}{m}{it}
\newtheorem{theorem}{Theorem}
\newtheorem{lemma}{Lemma}
\newtheorem{corollary}{Corollary}
\newtheorem{remark}{Remark}
\newtheorem{definition}{Definition}
\newtheorem{algorithm}{Algorithm}
\title{Resolution Guarantees for the Reconstruction of Inclusions in Linear Elasticity Based on Monotonicity Methods}
\author{Sarah Eberle\thanks{eberle@math.uni-frankfurt.de, Institute of Mathematics,
Goethe-University Frankfurt, Frankfurt am Main, Germany (corresponding author)}
\and Bastian Harrach\thanks{harrach@math.uni-frankfurt.de, Institute of Mathematics,
Goethe-University Frankfurt, Frankfurt am Main, Germany}}
\date{}
\begin{document}

\maketitle

% REQUIRED
\begin{abstract}
\noindent
We deal with the reconstruction of inclusions in elastic bodies based on monotonicity methods and construct conditions
under which a resolution for a given partition can be achieved. These conditions take into account the background error 
as well as the measurement noise. As a main result, this shows us that the resolution guarantees depend heavily on the Lam\'e parameter $\mu$ and only marginally on $\lambda$.
\end{abstract}

{\bf Keywords}:
resolution guarantees, inverse problem, linear elasticity, detection and reconstruction of inclusions, monotonicity methods

%%%%%%%%%%%%%%%%%%%%%%%%%%%%%%%%%%%%%%%%%%%%%%%%%%%%%%%%%%%%%%%%%%%%%%%%%%%%%%%%%%%%%%%%%%%%%%%%%%%%

\vspace{1cm}

\section{Introduction} 

In this paper we deal with the detection and reconstruction of inclusions in elastic bodies based on monotonicity methods, where the main focus lies on the so-called "resolution guarantees". Our results are of special importance, when considering simulations based on real data.
\\
\\
Before we introduce the definition of the resolution guarantees we state the setting.
Let $\Omega\subset \mathbb{R}^d$ ($d=2$ or $3$) be a bounded and connected open set 
with Lipschitz boundary $\partial\Omega=\Gamma=\overline{\Gamma_{\mathrm{D}}\cup \Gamma_{\mathrm{N}}}$, $\Gamma_{\mathrm{D}}\cap \Gamma_{\mathrm{N}}=\emptyset$,
where $\Gamma_{\textup D}$ and $\Gamma_{\textup N}$ are the corresponding Dirichlet and Neumann boundaries.
{\color{black} We assume that  $\Gamma_{\textup D}$ and $\Gamma_{\textup N}$ are relatively open and connected.} Further, $\Omega$ is divided into $N$ disjoint open subsets $(\omega_s)_{s=1,\ldots,N}$, i.e. a pixel partition, such that 
$\overline{\Omega}=\bigcup_{s=1,\ldots,N}\overline{\omega_s}$.
\\
\\
We base our considerations on the work \cite{Harrach_Resol}, where the resolution guarantees for the electrical impedance tomography (EIT) problem were considered.

\begin{definition}\label{resolution}
An inclusion detection method that yields a reconstruction $\mathcal{D}_R$ to the true inclusion $\mathcal{D}$ is
said to fulfill a resolution guarantee w.r.t. a partition $(\omega_s)_{s=1,\ldots,N}\subseteq \Omega$ if
\begin{itemize}
\item[(i)] $\omega_s\subseteq \mathcal{D}$ implies $\omega_s\subseteq\mathcal{D}_R$ for $s\in\lbrace 1,2,\ldots, N\rbrace$
\\
(i.e. every element that is covered by the inclusion will be marked in the reconstruction),
\item[(ii)] $\mathcal{D}=\emptyset$ implies $\mathcal{D}_R = \emptyset$
\\
(i.e. if there is no inclusion then no element will be marked in the reconstruction).
\end{itemize}
\end{definition}

\begin{remark}
Obviously, a reconstruction guarantee will not hold true for arbitrarily fine partitions. The achievable resolution will
depend on the number of applied boundary forces, the inclusion contrast, the background error and the measurement noise.
\end{remark}

\newpage

\noindent
We aim to construct conditions under which a resolution for a given partition can be achieved.  These conditions take into account the background error 
as well as the measurement noise. Thus, we show that it is generally possible to give rigorous guarantees in linear elasticity.
\\
\\
Before we go into more detail about the resolution guarantees, we would like to give an insight into the theory and methods of inclusion detection considered so far.
\\
\\
The theory of the inverse problem of linear elasticity, i.e.$\,$uniqueness results and Lipschitz stability studies, etc. were examined, e.g., in the works given below: 
\cite{imanuvilov2011reconstruction}, \cite{lin2017boundary}, \cite{nakamura1993identification} deal with the 2D case and \cite{ikehata1990inversion} with two and three dimensions. For uniqueness results in 3D we want to mention
\cite{eskin2002inverse} and \cite{nakamura1995inverse, nakamura2003global} as well as \cite{beretta2014lipschitz, beretta2014uniqueness} and \cite{lin2017boundary, nakamura1999layer,nakamura1995inverse}, where some boundary determination results were proved. In addition, results concerning the anisotropic case can be found, e.g., in \cite{Ikehata1998,
Ikehata2006} and \cite{Ikehata1999}.
Finally, \cite{Ikehata2002} discussed the reconstruction of inclusion from boundary measurements.
\\
\\
The following methods, among others, were used to solve the inverse problem of linear elasticity:
A Landweber iteration method was applied in \cite{Hubmer} and \cite{Marin_2005}. Further on, \cite{Jadamba} and \cite{Marin} considered regularization approaches. Beside the aforementioned methods, adjoint methods were used in \cite{Oberai_2004, Oberai_2003} and \cite{Seidl}. Further on, \cite{Andrieux}, \cite{Ferrier} and \cite{Steinhorst} took a look at reciprocity principles. Finally, we want to mention the monotonicity methods for linear elasticity developed by the authors of this paper in \cite{Eberle_mon_test}.
\\
\\
We focus on the monotonicity methods which are built on the examinations in \cite{Tamburrino06, tamburrino2002new}. These methods were first used for EIT (see, e.g., \cite{gebauer2008localized,harrach2009uniqueness,harrach2012simultaneous,harrach2018localizing, harrach2013monotonicity}) and then on other problems such as elasticity (see, e.g., \cite{Eberle_mon_test, Eberle_Mon_Bas_Reg, Eberle_Monotonicity}). In short, for this method the monotonicity properties of Neumann-to-Dirichlet operator plays an essential role. All in all, this builds the basis for our considerations.
\\
\\
This paper is organized as follows:
\\
First, we introduce the problem statement for linear elasticity and the setting, where we distinguish between the continuous and discrete case.
Next, we give a summary of the monotonicity methods, i.e., the standard monotonicity tests as well as the linearized monotonicity tests.
In Section 4, we present the background for the resolution guarantees and state the algorithms of the aforementioned monotonicity tests. Further on, we prove the required theorems which build the basis for the algorithms. Finally, we simulate the reconstruction for different settings. As a main result, we conclude that the resolution guarantees depend heavily on the Lam\'e parameter $\mu$ and only marginally on $\lambda$.

%%%%%%%%%%%%%%%%%%%%%%%%%%%%%%%%%%%%%%%%%%%%%%%%%%%%%%%%%%%%%%%%%%%%%%%%%%%%%%%%%%%%%%%%%%%%%%%%%%%%

\section{Problem Statement and Setting}

We take a look at the continuous and discrete setting and introduce the corresponding problems as well the required assumptions concerning the inclusion, the background as well as the measurement error.

\subsection{Continuous case}

We start with the introduction of the problems of interest, e.g., the {\it direct} as well as the {\it inverse problem}
of stationary linear elasticity.
\\
\\
For the following, we define
\begin{align*} 
L_+^\infty(\Omega):=\lbrace w \in L^\infty(\Omega):\underset{x\in\Omega}{\text{ess\,inf}}\,w(x)>0\rbrace.
\end{align*}
\noindent
Let $u:\Omega\to\mathbb{R}^d$ be the displacement vector, $\mu,\lambda:\Omega\to L^{\infty}_+(\Omega)$ the Lam\'{e} parameters, 
$\hat{\nabla} u=\tfrac{1}{2}\left(\nabla u + (\nabla u)^T\right)$ the symmetric gradient, $n$ is \mbox{the normal}
vector pointing outside of $\Omega$ , $g\in L^{2}(\Gamma_{\textup N})^d$ the boundary force and $I$ the $d\times d$-identity matrix.
We define the divergence of a matrix $A\in \mathbb{R}^{d\times d}$ via 
$\nabla\cdot A=\sum\limits_{i,j=1}^d\dfrac{\partial A_{ij}}{\partial x_j}e_i$, where $e_i$ is a unit vector
and $x_j$ a component of a vector from $\mathbb{R}^d$.
\\
The boundary value problem of linear elasticity ({\it direct problem}) is 
{\color{black} that $u\in H^1(\Omega)^d$ solves}
\begin{align}\label{direct_1}
\nabla\cdot \left(\lambda (\nabla\cdot u)I + 2\mu \hat{\nabla} u \right)&=0 \quad \mathrm{in}\,\,\Omega,\\
\left(\lambda (\nabla\cdot u)I + 2\mu \hat{\nabla} u \right) n&=g\quad \mathrm{on}\,\, \Gamma_{\textup N},\label{direct_2}\\
u&=0 \quad \mathrm{on}\,\, \Gamma_{\textup D}.\label{direct_3}
\end{align}
\noindent
From a physical point of view, this means that we deal with an elastic test body $\Omega$ which is fixed (zero displacement)
at $\Gamma_{\mathrm{D}}$ (Dirichlet condition) and apply a force $g$ on $\Gamma_{\mathrm{N}}$ (Neumann condition).
This results in the displacement $u$, which is measured on the boundary $\Gamma_{\mathrm{N}}$.
\\
\\
\noindent
The equivalent weak formulation of the boundary value problem (\ref{direct_1})-(\ref{direct_3})  is
that $u\in\mathcal{V}$ fulfills
\begin{align}
\label{var-direct_1}
\int_{\Omega} 2 \mu\, \hat{\nabla}u : \hat{\nabla}v  + \lambda \nabla \cdot u \,\nabla\cdot  v\,dx=\int_{\Gamma_{\textup N}}g \cdot v \,ds \quad \text{ for all } v\in \mathcal{V},
\end{align}
where
$\mathcal{V}:=\left\{   v\in H^1(\Omega)^d:\,  v_{|_{\Gamma_{\textup D}}}=0\right\}$.
\\
\\
We want to remark that for $\lambda,\mu \in L^{\infty}_+(\Omega)$, the existence and uniqueness of a solution to the variational formulation (\ref{var-direct_1}) can be shown by
the Lax-Milgram theorem (see e.g., in \cite{Ciarlet}).

\subsubsection*{Neumann-to-Dirichlet operator and its monotonicity properties}

Measuring boundary displacements that result from applying forces to $\Gamma_{\textup{N}}$ can be modeled by the
Neumann-to-Dirichlet operator $\Lambda(\lambda,\mu)$ defined by
\begin{align*}
\Lambda(\lambda,\mu): L^2(\Gamma_{\textup N})^d\rightarrow L^2(\Gamma_{\textup N})^d: \quad  g\mapsto u_{|_{\Gamma_{\textup N}}},
\end{align*}
\noindent
where $u\in\mathcal{V}$ solves (\ref{direct_1})-(\ref{direct_3}).
\\
\\
This operator is self-adjoint, compact and linear
{\color{black} (see Corollary 1.1 from \cite{Eberle_mon_test}}).
Its associated bilinear form is given by
\begin{align}
\langle g,\Lambda(\lambda,\mu)h\rangle=\int_{\Omega} 2 \mu\, \hat{\nabla}u^g_{(\lambda,\mu)} : \hat{\nabla}u^h_{(\lambda,\mu)}  + 
\lambda \nabla \cdot u^g_{(\lambda,\mu)} \,\nabla\cdot  u^h_{(\lambda,\mu)}\,dx,\label{bilinear_Lambda}
\end{align}
\noindent
where $u_{(\lambda,\mu)}^g$ solves the problem (\ref{direct_1})-(\ref{direct_3}) and $u_{(\lambda,\mu)}^h$ 
the corresponding problem with boundary force $h\in L^2(\Gamma_{\mathrm{N}})^d$.
\noindent
\\
\\
Another important property of $\Lambda(\lambda,\mu)$ is its Fr\'echet differentiability (for the corresponding proof 
see {\color{black} Lemma 2.3 in} \cite{Eberle_mon_test}).
For directions $\hat{\lambda}, \hat{\mu}\in L^\infty(\Omega)$, the derivative
\begin{align*}
\Lambda'(\lambda,\mu)(\hat{\lambda},\hat{\mu}):  L^2(\Gamma_{\textup N})^d\rightarrow  L^2(\Gamma_{\textup N})^d
\end{align*}
is the self-adjoint compact linear operator associated to the bilinear form
\begin{align*}
\langle \Lambda'(\lambda,\mu)(\hat{\lambda},\hat{\mu})g,h\rangle
=&-\int_{\Omega} 2 \hat{\mu}\, \hat{\nabla}u^g_{(\lambda,\mu)} : \hat{\nabla}u^h_{(\lambda,\mu)}  + \hat{\lambda} \nabla \cdot u^g_{(\lambda,\mu)} \,\nabla\cdot  u^h_{(\lambda,\mu)}\,dx.
\end{align*}
Note that for  $\hat{\lambda}_0, \hat{\lambda}_1, \hat{\mu}_0, \hat{\mu}_1 \in L^\infty(\Omega)$ with
$\hat{\lambda}_0\leq \hat{\lambda}_1 \text{ and }  \hat{\mu}_0 \leq \hat{\mu}_1$
we obviously have
\begin{align}\label{mon_Frechet}
\Lambda'(\lambda,\mu)(\hat{\lambda}_0,\hat{\mu}_0)\geq \Lambda'(\lambda,\mu)(\hat{\lambda}_1,\hat{\mu}_1),
\end{align}
\noindent
in the sense of {\color{black} quadratic forms}.
\\
\\
The {\it inverse problem} we consider here is the following:
{\color{black}
\begin{align*}
\text{ \it  Find the support of } (\lambda-\lambda_0,\mu-\mu_0)^T \text{ \it knowing the Neumann-to-Dirichlet operator  } \Lambda(\lambda,\mu).  
 \end{align*}
}
\subsection{Discrete case}\label{assump}
Next, we go over to the discrete case and take a look at the bounded domain $\Omega\subset \mathbb{R}^d$ with piecewise smooth boundary representing the elastic object.
Further on, let $\lambda,\mu:\Omega\to \mathbb{R}^+$ be the real valued Lam\'e parameter distribution inside $\Omega$.
\\
\\
We apply the forces $g_l$ on the Neumann boundary of the object, where the location of their support is denoted by 
$\Gamma_{\mathrm{N}}^{(l)}\subseteq\Gamma_{\mathrm{N}}$, $l=1,\ldots,M$. We assume that the patches are disjoint. Thus, the discrete boundary value problem is given by
\begin{align}\label{bound_prob_1}
\nabla\cdot \left(\lambda(\nabla\cdot u)I+2\mu\hat{\nabla}u\right)&=0 \,\,\quad \text{in}\,\,\Omega,\\
\left(\lambda(\nabla\cdot u)I+2\mu\hat{\nabla} u\right) n&=g_l\quad \text{on}\,\,\Gamma_{\mathrm{N}}^{(l)}, \label{bound_prob_2}\\
\left(\lambda(\nabla\cdot u)I+2\mu\hat{\nabla} u\right) n&=0  \quad \text{on}\,\,\Gamma_{\mathrm{N}}^{(i)}, \, i\neq l, \label{bound_prob_3}\\
u&=0  \,\,\quad \text{in}\,\,\Gamma_{\mathrm{D}}.\label{bound_prob_4}
\end{align}
\noindent
\\
The resulting displacement measurements are represented by the discrete version of $\Lambda(\lambda,\mu)$:
\begin{align}\label{NtD}
{\boldsymbol\Lambda}(\lambda,\mu)=\left(\Lambda_{l}^{(k)}(\lambda,\mu)\right)_{k,l=1,\ldots,M}
\end{align}
\noindent
with
\begin{align*}
\Lambda_l^{(k)}(\lambda,\mu):=\int_{\Gamma_{\mathrm{N}}^{(l)}}g_l\cdot u^{(k)}\,ds
\end{align*}
\noindent
and $u^{(k)}$ solves the boundary value problem (\ref{bound_prob_1})-(\ref{bound_prob_4}) for a boundary load $g_k$.

\subsection*{Assumptions regarding the inclusion, the background as well as the measurement error}

In the following, we introduce our assumptions and definitions concerning the Lam\'e parameters for the inclusion and background
including their error considerations.

\begin{itemize}
\item[(a)] Distribution of Lam\'e parameter $(\lambda(x),\mu(x))$:
\begin{align*}
(\lambda(x),\mu(x))=
\begin{cases}
(\lambda_{\mathcal{D}}(x),\mu_{\mathcal{D}}(x)),&\quad x\in\mathcal{D},\\
(\lambda_{\mathcal{B}}(x),\mu_{\mathcal{B}}(x)),&\quad x\in\Omega\setminus\mathcal{D},
\end{cases}
\end{align*}
\noindent
where $\mathcal{D}$ denotes the unknown inclusion and $\mathcal{B}$ the background.

\item[(b)] Background error $\epsilon^\lambda,\epsilon^\mu\geq 0$:
\begin{align*}
|(\lambda_{\mathcal{B}}(x),\mu_{\mathcal{B}}(x))-(\lambda_0,\mu_{0})|
\leq(\lambda_0\epsilon^\lambda,\mu_0\epsilon^\mu)\quad \text{for all}\,x\in\Omega\setminus\mathcal{D},
\end{align*}
\noindent
where the background Lam\'e parameters $\lambda_{\mathcal{B}}(x)$ and $\mu_{\mathcal{B}}(x)$ approximately agree with known positive constants $\lambda_0$ and $\mu_0$.

\item[(c)] Inclusion contrast $c^{\lambda}, c^{\mu}>0$: 
\\
We distinguish between the following two cases

 either $(\lambda_{\mathcal{D}}(x),\mu_{\mathcal{D}}(x))-(\lambda_0,\mu_0)\geq (c^{\lambda},c^{\mu})\,\,
\text{for all}\, x\in\mathcal{D}$
\\\
or $\quad\,\,\,\,(\lambda_0,\mu_0)-(\lambda_{\mathcal{D}}(x),\mu_{\mathcal{D}}(x))\geq (c^{\lambda},c^{\mu})\,\,
\text{for all}\, x\in\mathcal{D}$,
\\
\\
where the lower bounds $c^\lambda$ and $c^\mu$ are known.
\item[(d)] Measurement noise $\delta\geq 0$:
\begin{align*}
|| {\boldsymbol\Lambda}(\lambda,\mu)-{\boldsymbol\Lambda}^\delta(\lambda,\mu)||\leq \delta,
\end{align*}
\noindent
i.e., we assume that ${\boldsymbol\Lambda(\lambda,\mu)}$ is determined up to noise level $\delta>0$.
\end{itemize}

%%%%%%%%%%%%%%%%%%%%%%%%%%%%%%%%%%%%%%%%%%%%%%%%%%%%%%%%%%%%%%%%%%%%%%%%%%%%%%%%%%%%%%%%%%%%%%%%%%%

\section{Summary of the Monotonicity Methods}

First, we state the monotonicity estimates for the Neumann-to-Dirichlet operator $\Lambda(\lambda,\mu)$
{\color{black}
and denote by $u^g_{(\lambda,\mu)}$ the solution of problem (\ref{direct_1})-(\ref{direct_3}) for the boundary load
$g$ and the Lam\'e parameters $\lambda$ and $\mu$.
}
\begin{lemma}[Lemma 3.1 from \cite{Eberle_Monotonicity}]
\label{mono}
Let {\color{black}$(\lambda_1,\mu_1),(\lambda_2,\mu_2)  \in  L_+^\infty(\Omega)\times L_+^\infty(\Omega) $},  $g\in L^2(\Gamma_{\textup N})^d$ be an applied boundary force, and let 
$u_1:=u^{g}_{(\lambda_1,\mu_1)}\in \mathcal{V}$, $u_2:=u^{g}_{(\lambda_2,\mu_2)}\in \mathcal{V}$. Then
\begin{align}
\label{eqmono}
\int_\Omega &2(\mu_1-\mu_2)\hat{\nabla}u_2:\hat{\nabla}u_2+(\lambda_1-\lambda_2)\nabla\cdot u_2\nabla\cdot u_2\,dx\\ \nonumber
&\geq \langle g,\Lambda(\lambda_2,\mu_2)g\rangle-\langle g,\Lambda(\lambda_1,\mu_1)g\rangle\\ 
&\geq \int_\Omega 2(\mu_1-\mu_2)\hat{\nabla}u_1 :\hat{\nabla}u_1+(\lambda_1-\lambda_2)\nabla\cdot u_1 \nabla\cdot u_1\,dx.
\end{align}
\end{lemma}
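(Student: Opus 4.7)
The natural approach is to exploit the variational characterization of the Neumann-to-Dirichlet operator. Define the bilinear form
\begin{equation*}
a_{(\lambda,\mu)}(u,v) := \int_\Omega 2\mu\,\hat{\nabla}u:\hat{\nabla}v + \lambda\,\nabla\cdot u\,\nabla\cdot v\,dx,
\end{equation*}
which, by (\ref{var-direct_1}) and (\ref{bilinear_Lambda}), satisfies $a_{(\lambda_i,\mu_i)}(u_i,v) = \int_{\Gamma_{\textup N}} g\cdot v\,ds$ for all $v\in\mathcal{V}$ and $\langle g,\Lambda(\lambda_i,\mu_i)g\rangle = a_{(\lambda_i,\mu_i)}(u_i,u_i)$ for $i=1,2$. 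The key observation that couples the two problems is that, by testing with the \emph{other} solution, one obtains the cross identities
\begin{equation*}
a_{(\lambda_1,\mu_1)}(u_1,u_2) = \int_{\Gamma_{\textup N}} g\cdot u_2\,ds = \langle g,\Lambda(\lambda_2,\mu_2)g\rangle, \qquad a_{(\lambda_2,\mu_2)}(u_2,u_1) = \langle g,\Lambda(\lambda_1,\mu_1)g\rangle.
\end{equation*}

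With these identities in hand, the two inequalities fall out of a single trick. First I would apply the trivial coercivity estimate $a_{(\lambda_1,\mu_1)}(u_1-u_2,u_1-u_2) \geq 0$, which follows because $\lambda_1,\mu_1 \in L^\infty_+(\Omega)$ so $a_{(\lambda_1,\mu_1)}$ is a nonnegative quadratic form. Expanding and substituting the cross identity yields
\begin{equation*}
a_{(\lambda_1,\mu_1)}(u_2,u_2) - 2\langle g,\Lambda(\lambda_2,\mu_2)g\rangle + \langle g,\Lambda(\lambda_1,\mu_1)g\rangle \geq 0,
\end{equation*}
and rewriting $a_{(\lambda_1,\mu_1)}(u_2,u_2) - a_{(\lambda_2,\mu_2)}(u_2,u_2) = \int_\Omega 2(\mu_1-\mu_2)\hat{\nabla}u_2:\hat{\nabla}u_2 + (\lambda_1-\lambda_2)\nabla\cdot u_2\nabla\cdot u_2\,dx$ gives exactly the upper bound in (\ref{eqmono}).

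The lower bound is symmetric: I would apply $a_{(\lambda_2,\mu_2)}(u_1-u_2,u_1-u_2) \geq 0$, expand, and use the other cross identity to obtain
\begin{equation*}
a_{(\lambda_2,\mu_2)}(u_1,u_1) - 2\langle g,\Lambda(\lambda_1,\mu_1)g\rangle + \langle g,\Lambda(\lambda_2,\mu_2)g\rangle \geq 0,
\end{equation*}
which rearranges to the desired lower inequality once $a_{(\lambda_1,\mu_1)}(u_1,u_1) - a_{(\lambda_2,\mu_2)}(u_1,u_1)$ is expressed as the $\Omega$-integral appearing on the right.

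There is no real obstacle here beyond bookkeeping; the only point requiring care is correctly pairing each test function with the right bilinear form so that the boundary integral collapses onto the correct $\langle g,\Lambda(\cdot,\cdot)g\rangle$. The argument is the direct elastic analogue of the classical Kang--Seo--Sheen / Ikehata monotonicity identity used in EIT, and the Lax--Milgram well-posedness mentioned after (\ref{var-direct_1}) guarantees that $u_1,u_2$ exist and that the bilinear forms are nonnegative, so no additional regularity is needed.
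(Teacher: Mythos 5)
Your proof is correct: the cross identities obtained by testing each weak formulation with the other solution, combined with nonnegativity of $a_{(\lambda_1,\mu_1)}(u_1-u_2,u_1-u_2)$ and $a_{(\lambda_2,\mu_2)}(u_1-u_2,u_1-u_2)$, yield exactly the two inequalities of (\ref{eqmono}). This is the same quadratic-expansion argument used in the cited source (the present paper only quotes the lemma from \cite{Eberle_Monotonicity} without reproducing its proof), so no further comparison is needed.
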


\noindent

\begin{lemma}[Lemma 2 from \cite{Eberle_mon_test}]\label{mono_2}
Let {\color{black}$(\lambda_1,\mu_1),(\lambda_2,\mu_2)  \in  L_+^\infty(\Omega)\times L_+^\infty(\Omega) $},   $g\in L^2(\Gamma_{\textup N})^d$ be an applied boundary force, and let 
\mbox{$u_1:=u^{g}_{(\lambda_1,\mu_1)}\in \mathcal{V}$}, $u_2:=u^{g}_{(\lambda_2,\mu_2)}\in \mathcal{V}$. Then
\begin{align}\label{mono_lame}
\langle g&,\Lambda(\lambda_2,\mu_2)g\rangle-\langle g,\Lambda(\lambda_1,\mu_1)g\rangle\\  \nonumber
&\geq 
\int_{\Omega} 2\left(\mu_2-\frac{\mu_2^2}{\mu_1}\right)\hat{\nabla}u_2:\hat{\nabla}u_2\,dx
+\int_{\Omega}\left(\lambda_2-\frac{\lambda_2^2}{\lambda_1}\right)\nabla\cdot u_2 \nabla \cdot u_2\,dx\\
&=
\int_{\Omega} 2\frac{\mu_2}{\mu_1}\left(\mu_1-\mu_2\right)\hat{\nabla}u_2:\hat{\nabla}u_2\,dx
+\int_{\Omega}\frac{\lambda_2}{\lambda_1}\left(\lambda_1-\lambda_2\right) \nabla\cdot u_2 \nabla \cdot u_2\,dx.
\end{align}
\end{lemma}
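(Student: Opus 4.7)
The plan is to derive (\ref{mono_lame}) via the elasticity analogue of Ikehata's completion-of-the-square trick: express both Neumann-to-Dirichlet forms using the bilinear representation (\ref{bilinear_Lambda}) and then test a carefully chosen non-negative pointwise integrand.

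First I would record two consequences of the weak formulation (\ref{var-direct_1}). Testing the equation for $u_i$ against $v=u_i$ yields the energy identity
\begin{equation*}
\langle g, \Lambda(\lambda_i,\mu_i)g\rangle = \int_\Omega 2\mu_i\, \hat{\nabla} u_i : \hat{\nabla} u_i + \lambda_i (\nabla\cdot u_i)(\nabla\cdot u_i)\,dx, \qquad i=1,2,
\end{equation*}
while testing the equation for $u_2$ against $v=u_1 \in \mathcal{V}$ yields the mixed identity
\begin{equation*}
\int_\Omega 2\mu_2\,\hat{\nabla}u_2 : \hat{\nabla}u_1 + \lambda_2 (\nabla\cdot u_2)(\nabla\cdot u_1)\,dx \;=\; \int_{\Gamma_{\textup N}} g\cdot u_1 \,ds \;=\; \langle g,\Lambda(\lambda_1,\mu_1)g\rangle.
\end{equation*}

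The core step is then the pointwise non-negative inequality: since $\mu_1,\lambda_1>0$ almost everywhere,
\begin{equation*}
0 \le \int_\Omega 2\mu_1\Bigl(\hat{\nabla}u_1 - \tfrac{\mu_2}{\mu_1}\hat{\nabla}u_2\Bigr) : \Bigl(\hat{\nabla}u_1 - \tfrac{\mu_2}{\mu_1}\hat{\nabla}u_2\Bigr) + \lambda_1\Bigl(\nabla\cdot u_1 - \tfrac{\lambda_2}{\lambda_1}\nabla\cdot u_2\Bigr)^2 dx.
\end{equation*}
Expanding the two squares splits the right-hand side into three pieces: the pure $u_1$-terms reproduce $\langle g,\Lambda(\lambda_1,\mu_1)g\rangle$ via the energy identity; the cross terms simplify to $-2\langle g,\Lambda(\lambda_1,\mu_1)g\rangle$ via the mixed identity, since the prefactor $2\mu_1\cdot\mu_2/\mu_1$ collapses to exactly the integrand $2\mu_2\hat{\nabla}u_1:\hat{\nabla}u_2$ of that identity (and analogously for $\lambda$); and the pure $u_2$-terms give
\begin{equation*}
\int_\Omega 2\tfrac{\mu_2^2}{\mu_1}\,\hat{\nabla}u_2 : \hat{\nabla}u_2 + \tfrac{\lambda_2^2}{\lambda_1}(\nabla\cdot u_2)(\nabla\cdot u_2)\,dx.
\end{equation*}
Collecting terms produces the key inequality
\begin{equation*}
\int_\Omega 2\tfrac{\mu_2^2}{\mu_1}\,\hat{\nabla}u_2 : \hat{\nabla}u_2 + \tfrac{\lambda_2^2}{\lambda_1}(\nabla\cdot u_2)(\nabla\cdot u_2)\,dx \;\ge\; \langle g,\Lambda(\lambda_1,\mu_1)g\rangle.
\end{equation*}

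To finish, I would subtract this from the energy identity for $i=2$; after factoring $\mu_2$ and $\lambda_2$ out of the integrand, the right-hand side is exactly the first lower bound in (\ref{mono_lame}), and the subsequent equality is the purely algebraic identity $\mu_2 - \mu_2^2/\mu_1 = (\mu_2/\mu_1)(\mu_1-\mu_2)$ together with its counterpart for $\lambda$. The only non-trivial step is spotting the correct weights $\mu_2/\mu_1$ and $\lambda_2/\lambda_1$ inside the squared integrand so that the cross terms line up precisely with the mixed identity; once that guess is in place, the rest is routine bookkeeping.
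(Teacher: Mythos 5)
Your argument is correct and is essentially the standard proof of this lemma (the one in the cited reference \cite{Eberle_mon_test}): both rest on the energy identity $\langle g,\Lambda(\lambda_i,\mu_i)g\rangle=\int_\Omega 2\mu_i\,\hat{\nabla}u_i:\hat{\nabla}u_i+\lambda_i(\nabla\cdot u_i)^2\,dx$, the mixed identity obtained by testing with $u_1$, and completing the square with the weights $\mu_2/\mu_1$, $\lambda_2/\lambda_1$. The only difference is organizational -- you first isolate $\int_\Omega 2\tfrac{\mu_2^2}{\mu_1}\hat{\nabla}u_2:\hat{\nabla}u_2+\tfrac{\lambda_2^2}{\lambda_1}(\nabla\cdot u_2)^2\,dx\geq\langle g,\Lambda(\lambda_1,\mu_1)g\rangle$ and then subtract, rather than writing the difference of the quadratic forms directly as a nonnegative square plus the claimed bound -- which is the same computation.
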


\noindent

\begin{corollary}[Corollary 3.2 from \cite{Eberle_Monotonicity}]\label{monotonicity}
For $(\lambda_0,\mu_0),(\lambda_1,\mu_1) \in L_+^\infty(\Omega)\times L_+^\infty(\Omega) $
\begin{align}
\lambda_0\leq \lambda_1 \text{ and } \mu_0\leq \mu_1 \quad \text{  implies } \quad \Lambda(\lambda_0,\mu_0)\geq \Lambda(\lambda_1,\mu_1).
\end{align}
\end{corollary}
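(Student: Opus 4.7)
The statement is an immediate consequence of the monotonicity estimate in Lemma \ref{mono}, so my plan is to simply apply that lemma in the correct direction and exploit pointwise positivity.

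My approach is as follows. Given $g\in L^2(\Gamma_{\textup N})^d$, I invoke Lemma \ref{mono} with the roles of the two Lam\'e pairs swapped, i.e.\ I set $(\lambda_1,\mu_1)_{\mathrm{Lem.}}=(\lambda_1,\mu_1)$ and $(\lambda_2,\mu_2)_{\mathrm{Lem.}}=(\lambda_0,\mu_0)$, and use the \emph{lower} bound in \eqref{eqmono}. Writing $u_1:=u^{g}_{(\lambda_1,\mu_1)}$, this yields
\begin{align*}
\langle g,\Lambda(\lambda_0,\mu_0)g\rangle-\langle g,\Lambda(\lambda_1,\mu_1)g\rangle
\geq \int_\Omega 2(\mu_1-\mu_0)\hat{\nabla}u_1:\hat{\nabla}u_1+(\lambda_1-\lambda_0)\nabla\cdot u_1\,\nabla\cdot u_1\,dx.
\end{align*}

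The key observation is then that both terms on the right are pointwise nonnegative: by assumption $\mu_1-\mu_0\geq 0$ and $\lambda_1-\lambda_0\geq 0$ a.e.\ in $\Omega$, while $\hat{\nabla}u_1:\hat{\nabla}u_1=|\hat{\nabla}u_1|_F^2\geq 0$ and $\nabla\cdot u_1\,\nabla\cdot u_1=(\nabla\cdot u_1)^2\geq 0$. Hence the integral is $\geq 0$, which gives $\langle g,\Lambda(\lambda_0,\mu_0)g\rangle\geq \langle g,\Lambda(\lambda_1,\mu_1)g\rangle$ for every $g\in L^2(\Gamma_{\textup N})^d$. Since $\Lambda(\lambda_0,\mu_0)$ and $\Lambda(\lambda_1,\mu_1)$ are self-adjoint, this is precisely the statement $\Lambda(\lambda_0,\mu_0)\geq \Lambda(\lambda_1,\mu_1)$ in the sense of quadratic forms.

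There is really no main obstacle: the whole content is packaged into Lemma \ref{mono}, and the corollary is just the observation that choosing the lower bound and using sign information makes the right-hand side manifestly nonnegative. The only thing to be slightly careful about is which of the two bounds in \eqref{eqmono} to use (here the lower one, so that the solution $u_1$ appearing in the integrand corresponds to the pair with the larger parameters), but since both Lam\'e coefficients move in the same direction, either choice works equally well.
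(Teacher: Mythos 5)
Your argument is correct and is essentially the approach the paper relies on: the corollary is quoted from \cite{Eberle_Monotonicity}, where it follows from the monotonicity estimate of Lemma \ref{mono} exactly as you argue, by taking the lower bound in \eqref{eqmono} with $(\lambda_2,\mu_2)=(\lambda_0,\mu_0)$ and using $\lambda_1-\lambda_0\geq 0$, $\mu_1-\mu_0\geq 0$ together with the pointwise nonnegativity of $\hat{\nabla}u_1:\hat{\nabla}u_1$ and $(\nabla\cdot u_1)^2$. One small caveat: your closing remark that ``either choice works equally well'' is only true if you also swap the labeling of the two parameter pairs, since under your labeling the upper bound in \eqref{eqmono} merely bounds the difference from above by a nonnegative quantity and yields nothing.
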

\noindent
We give a short overview concerning the monotonicity methods, where  we restrict ourselves to the case $\lambda_1\geq \lambda_0$, $\mu_1\geq \mu_0$.
In the following, let $\mathcal{D}$ be the unknown inclusion and $\chi_\mathcal{D}$ the characteristic function w.r.t. $\mathcal{D}$. In addition, we deal with "noisy difference measurements",
i.e. distance measurements between $u^g_{(\lambda,\mu)}$ and $u^g_{(\lambda_0,\mu_0)}$ affected by noise,
which stem from  the corresponding system (\ref{direct_1})-(\ref{direct_3}).
\\
\\
{\color{black}
We define the outer support in correspondence to \cite{Eberle_mon_test} as follows: 
let $\phi=(\phi_1,\phi_2):\Omega\to \mathbb{R}^2$ be a measurable
function,
the outer support $\underset{\partial\Omega}{\mathrm{out}}\,\mathrm{supp}(\phi)$ is the 
 complement (in $\overline{\Omega}$) of the union of those relatively open $U\subseteq\overline{\Omega}$
 that are connected to $\partial\Omega$ and for which $\phi\vert_{U}=0$,
\noindent
respectively.

\subsection{Standard Monotonicity Tests}

We start our consideration with the standard monotonicity tests and take a look at the case for exact as well as noisy data. Here, we denote the material without inclusion by $(\lambda_0,\mu_0)$ and the Lam\'e parameters of the inclusion by $(\lambda_1,\mu_1)$.

\subsubsection*{Tests for exact and noisy data }

\begin{corollary}{Standard monotonicity test }(Corollary 2.4 from \cite{Eberle_mon_test})
\\
Let $\lambda_0,\lambda_1,\mu_0,\mu_1\in\mathbb{R}^+$, $(\lambda,\mu)=(\lambda_0+(\lambda_1-\lambda_0)\chi_\mathcal{D},\mu_0+(\mu_1-\mu_0)\chi_{\mathcal{D}})$
with $\lambda_1>\lambda_0$ and $\mu_1>\mu_0$ and the inclusion $\mathcal{D}=\mathrm{out}_{\partial\Omega}\,\mathrm{supp}((\lambda-\lambda_0,\mu-\mu_0)^T)$.
Further on, let $\alpha^{\lambda},\alpha^{\mu}\geq 0$, $\alpha^{\lambda}+\alpha^{\mu}>0$ with $\alpha^{\lambda} \leq \lambda_1 -\lambda_0$, $\alpha^{\mu}\leq \mu_1 -\mu_0$.
Then for every open set $\omega\subseteq\Omega$
\begin{align*}
\omega\subseteq\mathcal{D}\quad\text{if\,and\,only\,if}\quad\Lambda(\lambda_0+\alpha^{\lambda}\chi_\omega,\mu_0+\alpha^{\mu}\chi_\omega)\geq\Lambda(\lambda,\mu).
\end{align*}
\end{corollary}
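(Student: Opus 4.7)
The plan is to prove the two implications separately. The forward direction follows directly from the scalar monotonicity result Corollary \ref{monotonicity}, while the reverse direction reduces to its contrapositive combined with a localized-potentials construction for the elasticity system.

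For the ``only if'' direction, set $\tilde\lambda := \lambda_0+\alpha^\lambda\chi_\omega$ and $\tilde\mu := \mu_0+\alpha^\mu\chi_\omega$. Assuming $\omega\subseteq\mathcal{D}$, the hypotheses $\alpha^\lambda\leq\lambda_1-\lambda_0$ and $\alpha^\mu\leq\mu_1-\mu_0$ together with $\chi_\omega\leq\chi_{\mathcal{D}}$ yield the pointwise estimates $\tilde\lambda\leq\lambda_0+(\lambda_1-\lambda_0)\chi_{\mathcal{D}}=\lambda$ and $\tilde\mu\leq\mu_0+(\mu_1-\mu_0)\chi_{\mathcal{D}}=\mu$. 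Corollary \ref{monotonicity} then delivers $\Lambda(\tilde\lambda,\tilde\mu)\geq\Lambda(\lambda,\mu)$ at once.

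For the ``if'' direction I would argue contrapositively: assuming $\omega\not\subseteq\mathcal{D}$, I would produce a boundary force $g$ with $\langle g,\Lambda(\tilde\lambda,\tilde\mu)g\rangle<\langle g,\Lambda(\lambda,\mu)g\rangle$. Applying Lemma \ref{mono} with $(\lambda_1,\mu_1)=(\tilde\lambda,\tilde\mu)$, $(\lambda_2,\mu_2)=(\lambda,\mu)$ and writing $u := u^g_{(\tilde\lambda,\tilde\mu)}$ produces the upper bound
\[
\langle g,\Lambda(\tilde\lambda,\tilde\mu)g\rangle - \langle g,\Lambda(\lambda,\mu)g\rangle \;\leq\; \int_\Omega 2(\mu-\tilde\mu)\,\hat{\nabla}u:\hat{\nabla}u + (\lambda-\tilde\lambda)\,\nabla\cdot u\,\nabla\cdot u\,dx.
\]
A case analysis shows that $\mu-\tilde\mu$ and $\lambda-\tilde\lambda$ vanish off $\omega\cup\mathcal{D}$, equal the negative constants $-\alpha^\mu$ and $-\alpha^\lambda$ on $\omega\setminus\overline{\mathcal{D}}$, and are non-negative and bounded above by $\mu_1-\mu_0$ and $\lambda_1-\lambda_0$ on $\mathcal{D}$. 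By the outer-support characterization of $\mathcal{D}$ together with the openness of $\omega$, the set $\omega\setminus\overline{\mathcal{D}}$ is non-empty, open, and connected to $\partial\Omega$ through $\Omega\setminus\overline{\mathcal{D}}$.

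The main obstacle, and the decisive analytical step, is the construction of suitable localized potentials for the elasticity system, which I would import from \cite{Eberle_mon_test}: one obtains a sequence $(g_k)\subset L^2(\Gamma_{\textup N})^d$ such that the associated solutions $u_k := u^{g_k}_{(\tilde\lambda,\tilde\mu)}$ concentrate their strain and divergence energies on $\omega\setminus\overline{\mathcal{D}}$, while the corresponding integrals over $\mathcal{D}$ tend to zero. Since $\alpha^\lambda+\alpha^\mu>0$, inserting such $g_k$ into the displayed upper bound drives its right-hand side to $-\infty$, contradicting the assumed operator inequality and completing the contrapositive. A subtle point is that if one of $\alpha^\lambda,\alpha^\mu$ vanishes the localization must blow up the correct quadratic form (symmetric gradient versus divergence); separating these two energies for the coupled elasticity system is precisely what the elasticity version of the localized-potentials result provides.
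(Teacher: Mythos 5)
Your proposal is correct and follows essentially the same route as the original proof of this result in \cite{Eberle_mon_test}, which the present paper only quotes without reproving: the ``only if'' direction is exactly the pointwise comparison $(\lambda_0+\alpha^{\lambda}\chi_\omega,\mu_0+\alpha^{\mu}\chi_\omega)\leq(\lambda,\mu)$ combined with the monotonicity of the Neumann-to-Dirichlet operator, and the ``if'' direction is the contrapositive argument via the monotonicity estimate of Lemma~\ref{mono} together with the localized potentials result for the Lam\'e system from \cite{Eberle_mon_test}, which is precisely the ingredient that lets the energy blow up on $\omega\setminus\overline{\mathcal{D}}$ while it vanishes on $\mathcal{D}$ (including the case where only one of $\alpha^{\lambda},\alpha^{\mu}$ is positive, as you note).
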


\begin{corollary}{Standard monotonicity test for noisy data }(Corollary 2.6 from \cite{Eberle_mon_test})
\\
Let $\lambda_0,\lambda_1,\mu_0,\mu_1\in\mathbb{R}^+$, $(\lambda,\mu)=(\lambda_0+(\lambda_1-\lambda_0)\chi_\mathcal{D},\mu_0+(\mu_1-\mu_0)\chi_{\mathcal{D}})$
with $\lambda_1>\lambda_0$ and $\mu_1>\mu_0$ and the inclusion $\mathcal{D}=\mathrm{out}_{\partial\Omega}\,\mathrm{supp}((\lambda-\lambda_0,\mu-\mu_0)^T)$.
Further on, let $\alpha^{\lambda}\,\alpha^{\mu}\geq 0$, $\alpha+\beta>0$ with $\alpha^{\lambda} \leq \lambda_1 -\lambda_0$, $\alpha^{\mu}\leq \mu_1 -\mu_0$ and let each noise level $\delta>0$ fulfill
\begin{align*}
 \Vert{\Lambda^\delta(\lambda,\mu) - \Lambda(\lambda,\mu)}\Vert< \delta.
\end{align*}
\noindent
Then for every open set $\omega\subseteq\Omega$ there exists a noise level $\delta_0>0$,
such that $\omega$ is correctly detected as inside the inclusion $\mathcal{D}$
by the condition
\begin{align*}
\omega\subseteq\mathcal{D}\quad \textnormal{ if and only if} \quad
\Lambda(\lambda_0+\alpha^{\lambda}\chi_{\omega},\mu_0+\alpha^{\mu}\chi_{\omega})
-\Lambda^\delta(\lambda,\mu)
+\delta I
\geq 0
\end{align*}
\noindent
for all $\quad 0<\delta< \delta_0$.
\end{corollary}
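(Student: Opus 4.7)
The plan is to reduce the noisy-data corollary to its exact-data counterpart by absorbing the measurement error into the $\delta I$ term. The key observation is that, since $\Lambda^\delta(\lambda,\mu) - \Lambda(\lambda,\mu)$ is self-adjoint, the operator-norm bound $\Vert \Lambda^\delta(\lambda,\mu) - \Lambda(\lambda,\mu)\Vert < \delta$ is equivalent to the two-sided operator inequality $-\delta I \leq \Lambda^\delta(\lambda,\mu) - \Lambda(\lambda,\mu) \leq \delta I$, and in particular $\Lambda(\lambda,\mu) - \Lambda^\delta(\lambda,\mu) + \delta I \geq 0$.

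For the direction $\omega \subseteq \mathcal{D} \Rightarrow$ (condition), I would invoke the exact-data standard monotonicity test (Corollary~2.4 from \cite{Eberle_mon_test}) to obtain $\Lambda(\lambda_0+\alpha^\lambda\chi_\omega, \mu_0+\alpha^\mu\chi_\omega) - \Lambda(\lambda,\mu) \geq 0$, and then add the non-negative operator $\Lambda(\lambda,\mu) - \Lambda^\delta(\lambda,\mu) + \delta I \geq 0$ from the preceding paragraph. The sum is non-negative, which is exactly the claimed condition; observe that this direction holds for \emph{every} $\delta>0$, so only the reverse direction will constrain the choice of $\delta_0$.

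For the reverse direction, I would argue by contrapositive. Suppose $\omega \not\subseteq \mathcal{D}$. Then the exact-data corollary asserts that the self-adjoint operator
\begin{align*}
A := \Lambda(\lambda_0+\alpha^\lambda\chi_\omega, \mu_0+\alpha^\mu\chi_\omega) - \Lambda(\lambda,\mu)
\end{align*}
fails to be positive semi-definite, so I can pick a witness $g \in L^2(\Gamma_{\mathrm{N}})^d$ with $\langle g, Ag\rangle = -c$ for some $c>0$. Setting $\delta_0 := c/(2\Vert g\Vert^2) > 0$, for every $0<\delta<\delta_0$ the bound $|\langle g,(\Lambda-\Lambda^\delta)g\rangle| \leq \delta \Vert g\Vert^2$ yields
\begin{align*}
\langle g, (\Lambda(\lambda_0+\alpha^\lambda\chi_\omega, \mu_0+\alpha^\mu\chi_\omega) - \Lambda^\delta(\lambda,\mu) + \delta I)g\rangle \leq -c + 2\delta\Vert g\Vert^2 < 0,
\end{align*}
so the candidate operator is not positive semi-definite, contradicting the tested condition.

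The only delicate point, and thus the main (minor) obstacle, is to ensure that the witness $g$ produced from the exact-data corollary gives a \emph{strictly} negative value; this is immediate because the exact-data Corollary~2.4 of \cite{Eberle_mon_test} yields a strict failure of positivity, so the negative value $-c$ can be taken strictly negative and $\delta_0$ is then well-defined. Everything else is routine operator-inequality manipulation.
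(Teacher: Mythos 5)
Your proposal is correct, and it follows the standard argument behind this result: the paper itself states the corollary without proof (citing Corollary 2.6 of \cite{Eberle_mon_test}), and the proof there proceeds exactly as you do, using the exact-data monotonicity test plus the bound $\Vert\Lambda^\delta-\Lambda\Vert<\delta$ to get the ``$\omega\subseteq\mathcal{D}$'' direction for every $\delta>0$, and a witness vector $g$ with $\langle g,Ag\rangle<0$ together with a sufficiently small $\delta_0$ for the converse. Your choice $\delta_0=c/(2\Vert g\Vert^2)$ and the resulting estimate are sound, so there is nothing to correct.
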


\subsection{Linearized Monotonicity Tests}

We also introduce the linearized monotonicity tests as a modification of the standard methods. Similar as before, we deal with the exact as well as perturbed problem.

\subsubsection*{Tests for exact and noisy data }

\begin{corollary}{Linearized monotonicity test }(Corollary 2.7 from \cite{Eberle_mon_test}\label{lin_mon_1})
\\
Let $\lambda_0$, $\lambda_1$, $\mu_0$, $\mu_1\in\mathbb{R}^+$ with $\lambda_1>\lambda_0$, $\mu_1>\mu_0$  
and assume that 
$(\lambda,\mu)=(\lambda_0+(\lambda_1-\lambda_0)\chi_\mathcal{D},\mu_0+(\mu_1-\mu_0)\chi_{\mathcal{D}})$
with $\mathcal{D}=\mathrm{out}_{\partial\Omega}\,\mathrm{supp}((\lambda-\lambda_0,\mu-\mu_0)^T)$.
Further on let $\alpha^\lambda,\alpha^\mu\geq 0$, $\alpha^\lambda+\alpha^\mu>0$ and $\alpha^\lambda \leq \tfrac{\lambda_0}{\lambda_1}(\lambda_1 -\lambda_0)$,  $\alpha^\mu \leq\tfrac{\mu_0}{\mu_1} (\mu_1 -\mu_ 0)$.
Then for every open set $\omega$
\begin{align*}
\omega\subseteq\mathcal{D}\quad\text{if and only if}\quad \Lambda(\lambda_0,\mu_0)+\Lambda^\prime(\lambda_0,\mu_0)(\alpha^\lambda \chi_\omega, \alpha^\mu \chi_\omega)\geq \Lambda(\lambda,\mu).
\end{align*}
\end{corollary}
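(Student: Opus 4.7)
I treat the two directions of the equivalence separately, in both cases using that $\Lambda(\lambda_0,\mu_0)$, $\Lambda(\lambda,\mu)$ and $\Lambda'(\lambda_0,\mu_0)(\alpha^\lambda\chi_\omega,\alpha^\mu\chi_\omega)$ are self-adjoint, so the operator inequality is equivalent to
\begin{align*}
\langle g,\Lambda(\lambda_0,\mu_0)g\rangle-\langle g,\Lambda(\lambda,\mu)g\rangle \;\geq\; \int_\omega 2\alpha^\mu\,\hat{\nabla}u_0:\hat{\nabla}u_0+\alpha^\lambda(\nabla\cdot u_0)^2\,dx
\end{align*}
for every $g\in L^2(\Gamma_{\textup N})^d$, where $u_0:=u^g_{(\lambda_0,\mu_0)}$ and I have used the explicit bilinear form of $\Lambda'$ recorded just before (\ref{mon_Frechet}) to rewrite the right-hand side.

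\emph{Forward direction.} Assuming $\omega\subseteq\mathcal{D}$, I apply Lemma~\ref{mono_2} with $(\lambda_1,\mu_1)$ replaced by the piecewise pair $(\lambda,\mu)$ and $(\lambda_2,\mu_2)$ replaced by the constants $(\lambda_0,\mu_0)$, so that $u_2=u_0$, obtaining
\begin{align*}
\langle g,\Lambda(\lambda_0,\mu_0)g\rangle-\langle g,\Lambda(\lambda,\mu)g\rangle \;\geq\; \int_\Omega 2\tfrac{\mu_0}{\mu}(\mu-\mu_0)\,\hat{\nabla}u_0:\hat{\nabla}u_0+\tfrac{\lambda_0}{\lambda}(\lambda-\lambda_0)(\nabla\cdot u_0)^2\,dx.
\end{align*}
The integrand vanishes on $\Omega\setminus\mathcal{D}$, and on $\mathcal{D}$ the coefficients reduce to the constants $\tfrac{\mu_0}{\mu_1}(\mu_1-\mu_0)$ and $\tfrac{\lambda_0}{\lambda_1}(\lambda_1-\lambda_0)$. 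Restricting the domain of integration via $\omega\subseteq\mathcal{D}$ and bounding these coefficients from below pointwise by means of the hypotheses $\alpha^\mu\leq\tfrac{\mu_0}{\mu_1}(\mu_1-\mu_0)$ and $\alpha^\lambda\leq\tfrac{\lambda_0}{\lambda_1}(\lambda_1-\lambda_0)$ yields exactly the target lower bound.

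\emph{Converse direction.} I argue by contraposition: assume $\omega\not\subseteq\mathcal{D}$. By the definition of the outer support, $\overline{\Omega}\setminus\mathcal{D}$ is relatively open and connected to $\partial\Omega$, and the openness of $\omega$ allows me to pick a nonempty open set $V\subseteq\omega\setminus\mathcal{D}$. If the operator inequality held for every $g$, then combining it with the upper bound in Lemma~\ref{mono} (with the same substitution as above) would force
\begin{align*}
\int_\omega 2\alpha^\mu\,\hat{\nabla}u_0:\hat{\nabla}u_0+\alpha^\lambda(\nabla\cdot u_0)^2\,dx \;\leq\; \int_{\mathcal{D}} 2(\mu_1-\mu_0)\,\hat{\nabla}u_0:\hat{\nabla}u_0+(\lambda_1-\lambda_0)(\nabla\cdot u_0)^2\,dx
\end{align*}
for all $g$. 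I then invoke the localized-potentials technique for the Lam\'e system developed in \cite{Eberle_mon_test} to construct a sequence $g_n$ whose associated solutions $u_0^{g_n}$ concentrate their elastic energy on $V$ while their elastic energy on $\mathcal{D}$ tends to zero; this contradicts the displayed inequality as $n\to\infty$.

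\emph{Main obstacle.} The forward direction amounts to bookkeeping on top of Lemma~\ref{mono_2}. The substantive work lies in the converse --- specifically the construction of localized potentials for linear elasticity, which rests on a Runge-type density result combined with a functional-analytic virtual-measurement duality adapted to the Lam\'e system. As this technology is already available in \cite{Eberle_mon_test}, the remaining concentration-of-energy argument is routine.
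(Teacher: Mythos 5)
This corollary is not proved in the present paper at all --- it is quoted verbatim as Corollary 2.7 of \cite{Eberle_mon_test} --- so the only meaningful comparison is with the proof in that reference, which your outline essentially reproduces: the ``if'' direction via the quotient-type monotonicity estimate of Lemma~\ref{mono_2} applied with $(\lambda_2,\mu_2)=(\lambda_0,\mu_0)$, and the ``only if'' direction by contraposition, combining the upper bound of Lemma~\ref{mono} with localized potentials for the Lam\'e system. Your bookkeeping in the forward direction is correct, and the reduction in the converse to the displayed energy comparison is exactly the standard step. The one point you gloss over is that the hypotheses allow $\alpha^\mu=0$, $\alpha^\lambda>0$ (or vice versa), in which case the left-hand energy consists only of the divergence term; the contradiction then requires a localized-potentials sequence for which $\int_V(\nabla\cdot u_0^{g_n})^2\,dx\to\infty$ specifically, not merely the full elastic or symmetric-gradient energy, while $\int_{\mathcal D}\hat\nabla u_0^{g_n}:\hat\nabla u_0^{g_n}\,dx\to 0$ (which also controls the divergence term there). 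The localized potentials theorem in \cite{Eberle_mon_test} does give this simultaneous control, so the gap is one of citation precision rather than substance, but a self-contained write-up should state which version of the blow-up is being invoked and why it covers the degenerate cases of $(\alpha^\lambda,\alpha^\mu)$.
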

\noindent

\begin{corollary}{Linearized monotonicity test for noisy data }(Corollary 2.9 from \cite{Eberle_mon_test}\label{lin_mono_test_noise})
\\
Let $\lambda_0$, $\lambda_1$, $\mu_0$, $\mu_1\in\mathbb{R}^+$ with $\lambda_1>\lambda_0$, $\mu_1>\mu_0$  
and assume that 
$(\lambda,\mu)=(\lambda_0+(\lambda_1-\lambda_0)\chi_\mathcal{D},\mu_0+(\mu_1-\mu_0)\chi_{\mathcal{D}})$
with $\mathcal{D}=\mathrm{out}_{\partial\Omega}\,\mathrm{supp}((\lambda-\lambda_0,\mu-\mu_0)^T)$.
Further on, let $\alpha^\lambda,\alpha^\mu\geq 0$, $\alpha^\lambda+\alpha^\mu>0$ with $\alpha^\lambda \leq \frac{\lambda_0}{\lambda_1} (\lambda_1 -\lambda_0) $,
$\alpha^\mu\leq \frac{\mu_0}{\mu_1} (\mu_1 -\mu_0) $.
Let $\Lambda^\delta$ be the Neumann-to-Dirichlet operator for noisy difference measurements with noise level $\delta>0$.
Then for every open set $\omega\subseteq\Omega$ there exists a noise level $\delta_0>0$, such that for all
$0<\delta<\delta_0$, $\omega$ is correctly detected as inside or not inside the inclusion $\mathcal{D}$ 
by the following monotonicity test
\begin{eqnarray*}
\omega\subseteq\mathcal{D}\quad \textnormal{\it if and only if} \quad
\Lambda(\lambda_0,\mu_0) +\Lambda^\prime(\lambda_0,\mu_0)(\alpha^\lambda\chi_{\omega},\alpha^\mu\chi_{\omega})-\Lambda^\delta(\lambda,\mu)+\delta I \geq 0.
\end{eqnarray*}
\end{corollary}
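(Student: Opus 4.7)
The plan is to reduce the noisy linearized test to the exact linearized test (Corollary \ref{lin_mon_1}) by absorbing the measurement perturbation into a positive semi-definite error term.

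Set $A_\omega := \Lambda(\lambda_0,\mu_0) + \Lambda'(\lambda_0,\mu_0)(\alpha^\lambda \chi_\omega,\alpha^\mu \chi_\omega) - \Lambda(\lambda,\mu)$ and $E^\delta := \Lambda(\lambda,\mu) - \Lambda^\delta(\lambda,\mu) + \delta I$, so that the operator tested in the noisy monotonicity inequality is precisely $A_\omega + E^\delta$. By the exact linearized test, $\omega \subseteq \mathcal{D}$ is equivalent to $A_\omega \geq 0$ in the sense of quadratic forms. Moreover, the noise bound $\|\Lambda(\lambda,\mu)-\Lambda^\delta(\lambda,\mu)\| \leq \delta$ forces $-\delta I \leq \Lambda(\lambda,\mu)-\Lambda^\delta(\lambda,\mu) \leq \delta I$, so
\begin{align*}
0 \leq E^\delta \leq 2\delta I.
\end{align*}

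For the forward implication, suppose $\omega \subseteq \mathcal{D}$. Then $A_\omega \geq 0$, and since $E^\delta \geq 0$, we immediately obtain $A_\omega + E^\delta \geq 0$ for every $\delta > 0$; this direction does not even require smallness of $\delta$. For the converse, I would argue by contraposition: assume $\omega \not\subseteq \mathcal{D}$, so $A_\omega \not\geq 0$. Because $\Lambda(\lambda_0,\mu_0)$, $\Lambda'(\lambda_0,\mu_0)(\alpha^\lambda\chi_\omega,\alpha^\mu\chi_\omega)$ and $\Lambda(\lambda,\mu)$ are all self-adjoint compact operators on $L^2(\Gamma_{\textup N})^d$, so is $A_\omega$; the spectral theorem then produces a genuine smallest eigenvalue $-c_\omega < 0$ with a unit eigenvector $\hat g \in L^2(\Gamma_{\textup N})^d$. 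Setting $\delta_0 := c_\omega/2 > 0$, for every $0 < \delta < \delta_0$ the bound $\|E^\delta\| \leq 2\delta < c_\omega$ yields
\begin{align*}
\langle \hat g, (A_\omega + E^\delta)\hat g\rangle \leq -c_\omega + 2\delta < 0,
\end{align*}
so the noisy test fails, as required.

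The only non-routine point is the step that turns the qualitative fact $A_\omega \not\geq 0$ into a quantitative spectral gap $-c_\omega < 0$; this is where compactness and self-adjointness of $A_\omega$ (inherited from the Neumann-to-Dirichlet operator and its Fr\'echet derivative, recalled earlier in the paper) are essential, since for a general bounded self-adjoint operator one could only obtain an infimum of $0$. Once the strict gap is in hand, the perturbation argument is completely standard, mirroring the proof of the noisy standard monotonicity test (Corollary 2.6 of \cite{Eberle_mon_test}), and the desired $\delta_0$ depends on $\omega$ through $c_\omega$ but not on the specific realization of the noise.
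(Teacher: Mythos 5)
Your proof is correct and is essentially the standard argument behind this corollary (which the present paper only quotes from \cite{Eberle_mon_test} rather than reproving): reduce to the exact linearized test via $A_\omega$, and absorb the noise through $0\leq E^\delta\leq 2\delta I$, with the forward direction holding for every $\delta$ and the converse via a quantitative negativity gap. One remark: the compactness/spectral-theorem step is unnecessary --- $A_\omega\not\geq 0$ already provides a single unit vector $\hat g$ with $\langle \hat g, A_\omega \hat g\rangle=-c_\omega<0$, and taking $\delta_0:=c_\omega/2$ with that very vector completes the contraposition, so no smallest eigenvalue is needed.
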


%%%%%%%%%%%%%%%%%%%%%%%%%%%%%%%%%%%%%%%%%%%%%%%%%%%%%%%%%%%%%%%%%%%%%%%%%%%%%%%%%%%%%%%%%%%%%%%%%%%

\section{Resolution Guarantees}

In this section we formulate the algorithms for the monotonicity tests, i.e., the standard monotonicity tests
as well as the linearized tests and follow the considerations in \cite{Harrach_Resol}, where resolution guarantees for EIT were analysed.

\subsection{Algorithms}

Before we take a look at the algorithms for the reconstruction, we define the corresponding notations which we will use in the following. We set
\begin{align*}
(\lambda_{\mathcal{B}_{\min}},\mu_{\mathcal{B}_{\min}})&:=(\lambda_0(1-\epsilon^\lambda),\mu_0(1-\epsilon^\mu)),\\
(\lambda_{\mathcal{B}_{\max}},\mu_{\mathcal{B}_{\max}})&:=(\lambda_0(1+\epsilon^\lambda),\mu_0(1+\epsilon^\mu)),\\
(\lambda_{\mathcal{D}_{\min}},\mu_{\mathcal{D}_{\min}})&:=(\lambda_0+c^{\lambda},\mu_0+c^{\mu}),\\
(\lambda_{\mathcal{D}_{\max}},\mu_{\mathcal{D}_{\max}})&:=(\lambda_0-c^{\lambda},\mu_0-c^{\mu}),
\end{align*}
\noindent
where the quantities are given in Subsection \ref{assump} assumptions (a)-(d).

\subsubsection{Algorithms for standard monotonicity tests}

We now formulate the algorithms for the standard monotonicity tests. We start with the case that 
\begin{align*}
\left(\lambda_{\mathcal{D}_{\min}},\mu_{\mathcal{D}_{\min}}\right)> (\lambda_{\mathcal{B}_{\max}},\mu_{\mathcal{B}_{\max}})
\end{align*}
\noindent
and it holds that
\begin{align*}
\left( \lambda_{\mathcal{D}}-\lambda_0,\mu_{\mathcal{D}}-\mu_0\right)\geq \left(c^{\lambda},c^{\mu}\right).
\end{align*}

\begin{algorithm}\label{alg_stand_test}
Mark each resolution element $\omega_s$ for which
\begin{align*}
{\boldsymbol\Lambda}\left( \tau_s^{\lambda},\tau_s^{\mu}\right)+\delta {\bf I}\geq {\boldsymbol\Lambda}^\delta(\lambda,\mu),\quad s\in\lbrace 1,2,\ldots,N\rbrace,
\end{align*}
\noindent
where
\begin{align*}
\tau_s^{\lambda}&:=\lambda_{\mathcal{B}_{\min}}\chi_{\Omega\setminus \omega_s}+\lambda_{\mathcal{D}_{\min}}\chi_{\omega_s},\\
\tau_s^{\mu}&:=\mu_{\mathcal{B}_{\min}}\chi_{\Omega\setminus \omega_s} +\mu_{\mathcal{D}_{\min}}\chi_{\omega_s}.
\end{align*}
\noindent
Then the reconstruction $\mathcal{D}_R$ is given by the union of the marked resolution elements.
\end{algorithm}
\noindent
\\
Further on, we take a look at the special case for "weaker" Lam\'e parameter inclusions and consider the case
\begin{align}\label{weak_param}
(\lambda_{\mathcal{D}_{\max}},\mu_{\mathcal{D}_{\max}})<(\lambda_{\mathcal{B}_{\min}},\mu_{\mathcal{B}_{\min}}).
\end{align}

\begin{algorithm}\label{alg_stand_test_weak}
Mark each resolution element $\omega_s$ for which
\begin{align*}
{\boldsymbol\Lambda}\left(\tau_s^{\lambda},\tau_s^{\mu}\right)-\delta {\bf I}\leq {\boldsymbol\Lambda}^\delta(\lambda,\mu),\quad s\in\lbrace 1,2,\ldots,N\rbrace,
\end{align*}
\noindent
where
\begin{align*}
\tau_s^{\lambda}&:= \lambda_{\mathcal{B}_{\max}}\chi_{\Omega\setminus \omega_s}+\lambda_{\mathcal{D}_{\max}}\chi_{\omega_s},\\
\tau_s^{\mu}&:=\mu_{\mathcal{B}_{\max}}\chi_{\Omega\setminus \omega_s} +\mu_{\mathcal{D}_{\max}}\chi_{\omega_s}.
\end{align*}
\end{algorithm}

\subsubsection{Algorithms for linearized monotonicity tests}

Replacing the monotonicity test for the case
$\left(\lambda_{\mathcal{D}_{\min}},\mu_{\mathcal{D}_{\min}}\right)> (\lambda_{\mathcal{B}_{\max}},\mu_{\mathcal{B}_{\max}})$, i.e.
\begin{align*}
{\boldsymbol\Lambda}\left( \tau_s^{\lambda},\tau_s^{\mu}\right)+\delta {\bf I}\geq {\boldsymbol\Lambda}^\delta(\lambda,\mu)
\end{align*}
\noindent
with their linearized approximations yields the linearized monotonicity test
\begin{align*}
{\boldsymbol\Lambda}\left( \lambda_{\mathcal{B}_{\min}},\mu_{\mathcal{B}_{\min}}\right)+{\boldsymbol\Lambda}^\prime\left( \lambda_{\mathcal{B}_{\min}},\mu_{\mathcal{B}_{\min}}\right)(\kappa^{\lambda} \chi_{\omega_s},\kappa^{\mu}\chi_{\omega_s})
+\delta {\bf I}\geq {\boldsymbol\Lambda}^\delta(\lambda,\mu),
\end{align*}
\noindent
where $\kappa^{\lambda},\kappa^{\mu}\in\mathbb{R}$ is a suitable contrast level defined in the following algorithm. Further, we assume the $\lambda_{\max}$ and $\mu_{\max}$ are global bounds with
\begin{align*}
\lambda(x)&\leq \lambda_{\max},\\
\mu(x)&\leq \mu_{\max}
\end{align*}
\noindent
for all $x\in\Omega$.

\begin{algorithm}\label{alg_lin_test}
Mark each resolution element $\omega_s$ for which
\begin{align*}
{\bf T}_s+\delta {\bf I}\geq {\boldsymbol\Lambda}^\delta(\lambda,\mu),
\end{align*}
\noindent
where
\begin{align*}
{\bf T}_s:={\boldsymbol\Lambda}\left( \lambda_{\mathcal{B}_{\min}},\mu_{\mathcal{B}_{\min}}\right)+{\boldsymbol\Lambda}^\prime\left( \lambda_{\mathcal{B}_{\min}},\mu_{\mathcal{B}_{\min}}\right)(\kappa^{\lambda}\chi_{\omega_s},\kappa^{\mu}\chi_{\omega_s}),
\end{align*}
\noindent
with
\begin{align}\label{kappa_l}
\kappa^{\lambda}&:=(c^{\lambda}+\lambda_0\epsilon^\lambda)\dfrac{\lambda_{\mathcal{B}_{\min}}}{\lambda_{\max}},\\
\kappa^{\mu}&:=(c^{\mu}+\mu_0\epsilon^\mu)\dfrac{\mu_{\mathcal{B}_{\min}}}{\mu_{\max}}.
\end{align}
\noindent
Then the reconstruction $\mathcal{D}_R$ is given by the union of the marked resolution elements.
\end{algorithm}
\noindent
\\
As for the standard monotonicity test, we formulate the linearized test for inclusions with weaker Lam\'e parameter which fulfill $(\lambda_{\mathcal{D}_{\max}},\mu_{\mathcal{D}_{\max}})<(\lambda_{\mathcal{B}_{\min}},\mu_{\mathcal{B}_{\min}})$.

\begin{algorithm}\label{alg_lin_test_weak}
Mark each resolution element $\omega_s$ for which
\begin{align*}
{\bf T}_s-\delta {\bf I}\leq {\boldsymbol\Lambda}^\delta(\lambda,\mu),
\end{align*}
\noindent
where
\begin{align*}
{\bf T}_s:={\boldsymbol\Lambda}\left( \lambda_{\mathcal{B}_{\max}},\mu_{\mathcal{B}_{\max}}\right)+{\boldsymbol\Lambda}^\prime\left( \lambda_{\mathcal{B}_{\max}},\mu_{\mathcal{B}_{\max}}\right)(\kappa^{\lambda}\chi_{\omega_s},\kappa^{\mu}\chi_{\omega_s})
\end{align*}
\noindent
with
\begin{align}\label{kappa_l_weak}
\kappa^{\lambda}&:=-(c^{\lambda}+\lambda_0\epsilon^\lambda),\\
\kappa^{\mu}&:=-(c^{\mu}+\mu_0\epsilon^\mu)
\end{align}
\noindent
for $s\in\lbrace 1,2,\ldots,N\rbrace$.
\noindent
\\
\\
Then the reconstruction $\mathcal{D}_R$ is given by the union of the marked resolution elements.
\end{algorithm}

\subsection{Formulation of theorems}

We will analyse the algorithms in more detail and take a look at the required theorems.

\subsubsection{Theorems for standard monotonicity tests}

\begin{theorem}\label{resol_stand_test_strong}
The reconstruction of Algorithm \ref{alg_stand_test} fulfills the resolution guarantees if 
\begin{align*}
\nu<-2\delta\leq 0
\end{align*} 
\noindent
with
\begin{align*}
\nu:=\max_{s=1,\ldots,N}\left( \min\left(\mathrm{eig}\left[ {\boldsymbol\Lambda}\left( \tau_s^{\lambda},\tau_s^{\mu}\right) -{\boldsymbol\Lambda}(\lambda_{\mathcal{B}_{\max}},\mu_{\mathcal{B}_{\max}})\right] \right)\right).
\end{align*}
\end{theorem}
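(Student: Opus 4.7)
The plan is to verify conditions (i) and (ii) of Definition \ref{resolution} separately for the reconstruction produced by Algorithm \ref{alg_stand_test}. Throughout, I will use monotonicity (Corollary \ref{monotonicity}) together with the noise bound from Subsection \ref{assump}(d), which I will rewrite in operator form as $-\delta \mathbf{I} \leq {\boldsymbol\Lambda}(\lambda,\mu) - {\boldsymbol\Lambda}^\delta(\lambda,\mu) \leq \delta\mathbf{I}$.

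For (i), I would fix $s$ with $\omega_s \subseteq \mathcal{D}$ and show that the pointwise inequality $(\lambda(x),\mu(x)) \geq (\tau_s^\lambda(x),\tau_s^\mu(x))$ holds on all of $\Omega$. On $\omega_s$ this follows from assumption (c), since $(\lambda,\mu) \geq (\lambda_{\mathcal{D}_{\min}},\mu_{\mathcal{D}_{\min}})$ there. On $\Omega\setminus\omega_s$ I split into the background part (use (b) to bound $(\lambda_\mathcal{B},\mu_\mathcal{B}) \geq (\lambda_{\mathcal{B}_{\min}},\mu_{\mathcal{B}_{\min}})$) and the remaining inclusion part (use the algorithm's standing assumption $(\lambda_{\mathcal{D}_{\min}},\mu_{\mathcal{D}_{\min}}) > (\lambda_{\mathcal{B}_{\max}},\mu_{\mathcal{B}_{\max}}) \geq (\lambda_{\mathcal{B}_{\min}},\mu_{\mathcal{B}_{\min}})$ to dominate $\tau_s$). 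Corollary \ref{monotonicity} then yields ${\boldsymbol\Lambda}(\tau_s^\lambda,\tau_s^\mu) \geq {\boldsymbol\Lambda}(\lambda,\mu)$, and adding $\delta\mathbf{I}$ and using the noise bound gives ${\boldsymbol\Lambda}(\tau_s^\lambda,\tau_s^\mu) + \delta\mathbf{I} \geq {\boldsymbol\Lambda}^\delta(\lambda,\mu)$, so $\omega_s$ is marked. Note that this direction does not require the hypothesis $\nu < -2\delta$.

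For (ii), suppose $\mathcal{D} = \emptyset$, so $(\lambda,\mu) = (\lambda_\mathcal{B},\mu_\mathcal{B}) \leq (\lambda_{\mathcal{B}_{\max}},\mu_{\mathcal{B}_{\max}})$ by assumption (b). Corollary \ref{monotonicity} yields ${\boldsymbol\Lambda}(\lambda_\mathcal{B},\mu_\mathcal{B}) \geq {\boldsymbol\Lambda}(\lambda_{\mathcal{B}_{\max}},\mu_{\mathcal{B}_{\max}})$, and combined with the noise bound,
\begin{align*}
{\boldsymbol\Lambda}^\delta(\lambda,\mu) \geq {\boldsymbol\Lambda}(\lambda_{\mathcal{B}_{\max}},\mu_{\mathcal{B}_{\max}}) - \delta\mathbf{I}.
\end{align*}
Fix any $s\in\{1,\ldots,N\}$ and let $v_s$ be a unit eigenvector associated to the minimum eigenvalue $\mu_s$ of ${\boldsymbol\Lambda}(\tau_s^\lambda,\tau_s^\mu) - {\boldsymbol\Lambda}(\lambda_{\mathcal{B}_{\max}},\mu_{\mathcal{B}_{\max}})$; by definition $\mu_s \leq \nu$. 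Testing the candidate operator against $v_s$ and using the displayed lower bound on ${\boldsymbol\Lambda}^\delta$, I obtain
\begin{align*}
\langle v_s, ({\boldsymbol\Lambda}(\tau_s^\lambda,\tau_s^\mu) + \delta\mathbf{I} - {\boldsymbol\Lambda}^\delta(\lambda,\mu))\, v_s\rangle \leq \mu_s + 2\delta \leq \nu + 2\delta < 0,
\end{align*}
so the algorithm's test fails for $\omega_s$. Since $s$ was arbitrary, no element is marked and $\mathcal{D}_R = \emptyset$.

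The main obstacle I expect is bookkeeping rather than a deep difficulty: one must be careful to distinguish the test parameters $(\lambda_{\mathcal{B}_{\min}},\mu_{\mathcal{B}_{\min}})$ appearing in $\tau_s$ from the true background $(\lambda_\mathcal{B},\mu_\mathcal{B})$ and from the conservative upper bound $(\lambda_{\mathcal{B}_{\max}},\mu_{\mathcal{B}_{\max}})$ featured in $\nu$, and to apply monotonicity in the correct direction on each sub-region. The only genuine quantitative step is the chained eigenvalue estimate in (ii), where the factor $2\delta$ arises from combining one $\delta$ of measurement noise with one $\delta$ from the additive safety margin in the algorithm, yielding precisely the threshold $\nu<-2\delta$.
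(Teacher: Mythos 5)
Your proof is correct and follows essentially the same route as the paper: part (i) is the same monotonicity-plus-noise argument (you merely spell out the pointwise comparison $(\tau_s^\lambda,\tau_s^\mu)\leq(\lambda,\mu)$ that the paper asserts), and part (ii) contains the same chain of estimates, only phrased directly via an eigenvector of ${\boldsymbol\Lambda}(\tau_s^\lambda,\tau_s^\mu)-{\boldsymbol\Lambda}(\lambda_{\mathcal{B}_{\max}},\mu_{\mathcal{B}_{\max}})$ instead of the paper's contradiction argument yielding $\nu\geq-2\delta$.
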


\begin{proof}
We start with the consideration of part (i) from Definition \ref{resolution} and let $\omega_s\subseteq\mathcal{D}$. Then
\begin{align*}
(\tau_s^\lambda,\tau_s^\mu)=\left(\lambda_{\mathcal{B}_{\min}}\chi_{\Omega\setminus \omega_s}+\lambda_{\mathcal{D}_{\min}}\chi_{\omega_s},\mu_{\mathcal{B}_{\min}}\chi_{\Omega\setminus \omega_s} +\mu_{\mathcal{D}_{\min}}\chi_{\omega_s}\right)\leq (\lambda,\mu).
\end{align*}
\noindent 
The knowledge, that from $(\lambda_1,\mu_1)\leq (\lambda_2,\mu_2)$ it follows that
\begin{align}\label{relation_l_m}
{\boldsymbol\Lambda}(\lambda_1,\mu_1)\geq {\boldsymbol\Lambda}(\lambda_2,\mu_2),
\end{align}
\noindent
implies that
\begin{align*}
{\boldsymbol\Lambda}\left(\tau_s^\lambda,\tau_s^\mu\right)\geq {\boldsymbol\Lambda}(\lambda,\mu).
\end{align*}
\noindent
Hence,
\begin{align*}
{\boldsymbol\Lambda}\left( \tau_s^\lambda,\tau_s^\mu\right)+\delta{\bf I}\geq {\boldsymbol\Lambda}^\delta(\lambda,\mu),
\end{align*}
\noindent
so that $\omega_s$ will be marked by the algorithm.
\\
This shows that part (i) of the resolution guarantee is satisfied.
\\
\\
To prove part (ii) of the resolution guarantee, assume that $\mathcal{D}=\emptyset$ and $\mathcal{D}_R\neq \emptyset$.
Then there must be an index $s\in\lbrace 1,2,\ldots,N\rbrace$ with
\begin{align*}
{\boldsymbol\Lambda}\left( \tau_s^\lambda,\tau_s^\mu\right)+\delta {\bf I}\geq {\boldsymbol\Lambda}^\delta(\lambda,\mu).
\end{align*}
\noindent
Again, with the monotonicity relation (\ref{relation_l_m}), we obtain
\begin{align*}
-2\delta {\bf I}
&\leq  {\boldsymbol\Lambda}\left(\tau_s^\lambda, \tau_s^\mu \right)
-\left(\delta {\bf I}+{\boldsymbol\Lambda}^\delta(\lambda,\mu)\right)\\
&\leq {\boldsymbol\Lambda}\left( \tau_s^\lambda, \tau_s^\mu \right)-{\boldsymbol\Lambda}(\lambda,\mu)\\
&\leq {\boldsymbol\Lambda}\left( \tau_s^\lambda, \tau_s^\mu \right)-
{\boldsymbol\Lambda}(\lambda_{\mathcal{B}_{\max}},\mu_{\mathcal{B}_{\max}})
\end{align*}
and thus $\nu\geq -2\delta$, which is a contradiction.
\end{proof}
\noindent
\\
All in all, this theorem gives a rigorous yet conceptually simple criterion to check whether a given resolution guarantee is valid or not.

\begin{remark}
Given a partition $\left(\omega_s\right)_{s=1,\ldots,N}$ and bounds on the background, we obtain $\nu$ from calculating
\begin{align*}
{\boldsymbol\Lambda}\left( \tau_s^\lambda, \tau_s^\mu \right)
\quad\text{and}\quad
{\boldsymbol\Lambda}(\lambda_{\mathcal{B}_{\max}},\mu_{\mathcal{B}_{\max}})
\end{align*}
\noindent
by solving the boundary value problem (\ref{bound_prob_1})-(\ref{bound_prob_4}). If this yields a negative value for $\nu$,
then the resolution guarantee holds true up to a measurement error of $\delta<-\dfrac{\nu}{2}$.
\end{remark}
\noindent
\\
Next, we formulate the corresponding theorem for case (\ref{weak_param}).

\begin{theorem}
The reconstruction of Algorithm \ref{alg_stand_test_weak} fulfills the resolution guarantee if
\begin{align*}
\nu > 2\delta \geq 0
\end{align*}
\noindent
with
\begin{align*}
\nu:=\min_{s=1,\ldots,N}\left( \max\left( \mathrm{eig}\left[ {\boldsymbol\Lambda}\left(\tau_s^\lambda,\tau_s^\mu\right) -{\boldsymbol\Lambda}(\lambda_{\mathcal{B}_{\min}},\mu_{\mathcal{B}_{\min}})\right] \right)\right).
\end{align*}
\end{theorem}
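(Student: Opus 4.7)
The plan is to mirror the proof of the previous theorem, flipping every inequality since we are now in the \emph{weak inclusion} regime $(\lambda_{\mathcal{D}_{\max}},\mu_{\mathcal{D}_{\max}})<(\lambda_{\mathcal{B}_{\min}},\mu_{\mathcal{B}_{\min}})$, so that the inclusion lowers the Lamé parameters and therefore \emph{increases} the Neumann-to-Dirichlet operator in the Loewner order.

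For part (i) of Definition \ref{resolution}, I would fix $\omega_s\subseteq\mathcal{D}$ and show the pointwise bound $(\lambda,\mu)\leq(\tau_s^\lambda,\tau_s^\mu)$ on all of $\Omega$. On $\omega_s$ this follows directly from assumption (c) in its weak form, which gives $(\lambda,\mu)\leq(\lambda_{\mathcal{D}_{\max}},\mu_{\mathcal{D}_{\max}})$. On $\Omega\setminus\omega_s$, I would split into the subset that still lies inside $\mathcal{D}$ and the subset in $\Omega\setminus\mathcal{D}$; the latter is controlled by (b) via $(\lambda,\mu)\leq(\lambda_{\mathcal{B}_{\max}},\mu_{\mathcal{B}_{\max}})$, while the former is again bounded by $(\lambda_{\mathcal{D}_{\max}},\mu_{\mathcal{D}_{\max}})$ which, by assumption (\ref{weak_param}), is below $(\lambda_{\mathcal{B}_{\max}},\mu_{\mathcal{B}_{\max}})$. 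Applying the monotonicity relation (\ref{relation_l_m}) reverses the inequality to $\boldsymbol\Lambda(\tau_s^\lambda,\tau_s^\mu)\leq\boldsymbol\Lambda(\lambda,\mu)$, and the noise bound (d) yields $\boldsymbol\Lambda(\lambda,\mu)\leq\boldsymbol\Lambda^\delta(\lambda,\mu)+\delta\mathbf{I}$, so subtracting $\delta\mathbf{I}$ gives the marking criterion of Algorithm \ref{alg_stand_test_weak}.

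For part (ii) I would argue by contradiction: assume $\mathcal{D}=\emptyset$ but some $\omega_s$ gets marked. With $\mathcal{D}=\emptyset$, assumption (b) forces $(\lambda,\mu)\geq(\lambda_{\mathcal{B}_{\min}},\mu_{\mathcal{B}_{\min}})$, hence by (\ref{relation_l_m}) we get $\boldsymbol\Lambda(\lambda,\mu)\leq\boldsymbol\Lambda(\lambda_{\mathcal{B}_{\min}},\mu_{\mathcal{B}_{\min}})$, and combining with the noise estimate yields $\boldsymbol\Lambda^\delta(\lambda,\mu)\leq\boldsymbol\Lambda(\lambda_{\mathcal{B}_{\min}},\mu_{\mathcal{B}_{\min}})+\delta\mathbf{I}$. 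Chaining this with the marking inequality $\boldsymbol\Lambda(\tau_s^\lambda,\tau_s^\mu)-\delta\mathbf{I}\leq\boldsymbol\Lambda^\delta(\lambda,\mu)$ gives
\[
\boldsymbol\Lambda(\tau_s^\lambda,\tau_s^\mu)-\boldsymbol\Lambda(\lambda_{\mathcal{B}_{\min}},\mu_{\mathcal{B}_{\min}})\leq 2\delta\mathbf{I},
\]
so the largest eigenvalue of this Hermitian difference is at most $2\delta$. Taking the minimum over $s$ forces $\nu\leq 2\delta$, contradicting the hypothesis $\nu>2\delta$.

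The only delicate point, and therefore the one I would write out most carefully, is the subdomain bookkeeping in part (i): on $\mathcal{D}\setminus\omega_s$ (which is typically nonempty when $\omega_s\subsetneq\mathcal{D}$) we do \emph{not} have an a priori bound by $\lambda_{\mathcal{B}_{\max}}$ from (b) alone, so without the weak-contrast hypothesis (\ref{weak_param}) the comparison $(\lambda,\mu)\leq(\tau_s^\lambda,\tau_s^\mu)$ would fail on that set. Everything else is a direct symmetry of the proof of Theorem \ref{resol_stand_test_strong}, so no further new ideas are needed.
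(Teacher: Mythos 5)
Your proof is correct and follows essentially the same route as the paper: part (i) is the inequality-reversed analogue of Theorem \ref{resol_stand_test_strong}, with your careful treatment of $\mathcal{D}\setminus\omega_s$ via (\ref{weak_param}) being exactly the point the paper leaves implicit as ``analogous'', and part (ii) is the same chaining/contradiction argument. Note that your conclusion $\nu\leq 2\delta$ is the correct form of the contradiction; the paper's ``$\nu\geq 2\delta$'' at the corresponding step is evidently a typo.
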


\begin{proof}
The proof of part (i) of the resolution guarantee is analogous to the proof of part (i) in the theorem before. 
\\
To show part (ii) of the resolution guarantee, assume that $\mathcal{D}=\emptyset$ and $\mathcal{D}_R\neq\emptyset$. Then there must be an index $s \in \lbrace 1,2\ldots,N\rbrace$ with
\begin{align*}
&{\boldsymbol\Lambda}\left( \tau_s^\lambda, \tau_s^\mu \right)-\delta {\bf I}\\
&\leq {\boldsymbol\Lambda}^\delta(\lambda,\mu)\\
&\leq {\boldsymbol\Lambda}(\lambda,\mu)+\delta {\bf I}.
\end{align*}
\noindent
Using the results from before, we obtain
\begin{align*}
&{\boldsymbol\Lambda}\left(\tau_s^\lambda, \tau_s^\mu \right)-2\delta {\bf I}\\
&\leq {\boldsymbol\Lambda}(\lambda_{\mathcal{B}_{\min}},\mu_{\mathcal{B}_{\min}})
\end{align*}
\noindent
and thus $\nu\geq 2\delta$, which is a contradiction.
\end{proof}

\subsubsection{Theorems for linearized monotonicity tests}

\begin{theorem}\label{cal_max}
The reconstruction of Algorithm \ref{alg_lin_test} fulfills the resolution guarantee if 
\begin{align*}
\nu<-2\delta \leq 0
\end{align*}
\noindent
with
\begin{align*}
\nu:=\max_{s=1,\ldots,N}\left(\min\left(\mathrm{eig}\left[{\bf T}_s-{\boldsymbol\Lambda}(\lambda_{\mathcal{B}_{\max}},\mu_{\mathcal{B}_{\max}})\right]\right)\right).
\end{align*}
\end{theorem}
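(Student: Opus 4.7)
The strategy is to mimic the two-part argument used in Theorem \ref{resol_stand_test_strong}: verify (i) by showing that $\omega_s \subseteq \mathcal{D}$ forces ${\bf T}_s \geq {\boldsymbol\Lambda}(\lambda,\mu)$, and verify (ii) by showing that assuming $\mathcal{D}=\emptyset$ together with $\mathcal{D}_R \neq \emptyset$ contradicts the assumption $\nu < -2\delta$. The replacement of the nonlinear estimate by the linearized one is where Lemma \ref{mono_2} enters, together with the very specific choice of the contrast levels $\kappa^\lambda$ and $\kappa^\mu$ in (\ref{kappa_l}).

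For part (i), assume $\omega_s \subseteq \mathcal{D}$. I will apply Lemma \ref{mono_2} with $(\lambda_1,\mu_1)=(\lambda,\mu)$ and $(\lambda_2,\mu_2)=(\lambda_{\mathcal{B}_{\min}},\mu_{\mathcal{B}_{\min}})$ to express $\langle g,({\boldsymbol\Lambda}(\lambda_{\mathcal{B}_{\min}},\mu_{\mathcal{B}_{\min}})-{\boldsymbol\Lambda}(\lambda,\mu))g\rangle$ as an integral involving the factors $\tfrac{\mu_{\mathcal{B}_{\min}}}{\mu}(\mu-\mu_{\mathcal{B}_{\min}})$ and $\tfrac{\lambda_{\mathcal{B}_{\min}}}{\lambda}(\lambda-\lambda_{\mathcal{B}_{\min}})$, evaluated on the solution $u_2=u^g_{(\lambda_{\mathcal{B}_{\min}},\mu_{\mathcal{B}_{\min}})}$. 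Adding the bilinear form of ${\boldsymbol\Lambda}^\prime(\lambda_{\mathcal{B}_{\min}},\mu_{\mathcal{B}_{\min}})(\kappa^\lambda\chi_{\omega_s},\kappa^\mu\chi_{\omega_s})$, which produces the negative integrand $-2\kappa^\mu\chi_{\omega_s}|\hat\nabla u_2|^2-\kappa^\lambda\chi_{\omega_s}|\nabla\cdot u_2|^2$, yields $\langle g,({\bf T}_s-{\boldsymbol\Lambda}(\lambda,\mu))g\rangle$ as an integral of non-negative terms provided the coefficients in front of $|\hat\nabla u_2|^2$ and $|\nabla\cdot u_2|^2$ are pointwise non-negative. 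Off $\omega_s$ this reduces to $\mu\geq \mu_{\mathcal{B}_{\min}}$ and $\lambda\geq \lambda_{\mathcal{B}_{\min}}$, which follow from assumption (b) on $\mathcal{B}$ and from $\mu_{\mathcal{D}_{\min}}>\mu_{\mathcal{B}_{\max}}\geq \mu_{\mathcal{B}_{\min}}$ on $\mathcal{D}$. On $\omega_s\subseteq \mathcal{D}$ I need the pointwise inequality $\tfrac{\mu_{\mathcal{B}_{\min}}}{\mu}(\mu-\mu_{\mathcal{B}_{\min}})\geq \kappa^\mu$, which is exactly where the definition $\kappa^\mu=(c^\mu+\mu_0\epsilon^\mu)\mu_{\mathcal{B}_{\min}}/\mu_{\max}$ in (\ref{kappa_l}) is tailored: combine $\mu-\mu_{\mathcal{B}_{\min}}\geq c^\mu+\mu_0\epsilon^\mu$ (from assumption (c) and the definition of $\mu_{\mathcal{B}_{\min}}$) with $\mu\leq \mu_{\max}$, and analogously for $\lambda$. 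This shows ${\bf T}_s\geq {\boldsymbol\Lambda}(\lambda,\mu)$, so ${\bf T}_s+\delta{\bf I}\geq {\boldsymbol\Lambda}(\lambda,\mu)+\delta{\bf I}\geq {\boldsymbol\Lambda}^\delta(\lambda,\mu)$ by assumption (d), and $\omega_s$ is marked.

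For part (ii), I would argue by contradiction as in Theorem \ref{resol_stand_test_strong}. Suppose $\mathcal{D}=\emptyset$ and some $\omega_s$ is marked. Then ${\bf T}_s+\delta{\bf I}\geq {\boldsymbol\Lambda}^\delta(\lambda,\mu)$, and using $\|{\boldsymbol\Lambda}(\lambda,\mu)-{\boldsymbol\Lambda}^\delta(\lambda,\mu)\|\leq \delta$ from assumption (d) I get ${\bf T}_s-{\boldsymbol\Lambda}(\lambda,\mu)\geq -2\delta{\bf I}$. Since $\mathcal{D}=\emptyset$, the true parameters satisfy $(\lambda,\mu)=(\lambda_{\mathcal{B}},\mu_{\mathcal{B}})\leq (\lambda_{\mathcal{B}_{\max}},\mu_{\mathcal{B}_{\max}})$ by assumption (b), so Corollary \ref{monotonicity} gives ${\boldsymbol\Lambda}(\lambda,\mu)\geq {\boldsymbol\Lambda}(\lambda_{\mathcal{B}_{\max}},\mu_{\mathcal{B}_{\max}})$. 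Chaining these two estimates produces
\begin{equation*}
{\bf T}_s-{\boldsymbol\Lambda}(\lambda_{\mathcal{B}_{\max}},\mu_{\mathcal{B}_{\max}})\geq -2\delta{\bf I},
\end{equation*}
hence $\min\mathrm{eig}[{\bf T}_s-{\boldsymbol\Lambda}(\lambda_{\mathcal{B}_{\max}},\mu_{\mathcal{B}_{\max}})]\geq -2\delta$ and consequently $\nu\geq -2\delta$, contradicting the hypothesis $\nu<-2\delta$.

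The only genuinely delicate point is the first step of part (i): one must recognise that Lemma \ref{mono_2}, rather than Lemma \ref{mono}, supplies the correct lower bound to cancel against $\Lambda'$, and that the explicit factors $\mu_{\mathcal{B}_{\min}}/\mu_{\max}$ and $\lambda_{\mathcal{B}_{\min}}/\lambda_{\max}$ in $\kappa^\mu,\kappa^\lambda$ were engineered precisely so that the resulting integrand is non-negative regardless of the precise interior values of $\lambda,\mu$. Once that bookkeeping is in place, the rest of the argument is formally identical to the standard-test case already treated.
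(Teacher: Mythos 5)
Your proposal is correct and follows essentially the same route as the paper's proof: part (i) uses Lemma \ref{mono_2} at $(\lambda_2,\mu_2)=(\lambda_{\mathcal{B}_{\min}},\mu_{\mathcal{B}_{\min}})$ together with the pointwise bounds $\lambda-\lambda_{\mathcal{B}_{\min}}\geq(c^\lambda+\lambda_0\epsilon^\lambda)\chi_{\omega_s}$, $\mu-\mu_{\mathcal{B}_{\min}}\geq(c^\mu+\mu_0\epsilon^\mu)\chi_{\omega_s}$ and $\lambda\leq\lambda_{\max}$, $\mu\leq\mu_{\max}$ to absorb the $\boldsymbol{\Lambda}'$ term via the choice of $\kappa^\lambda,\kappa^\mu$, and part (ii) is the same contradiction argument as in Theorem \ref{resol_stand_test_strong}. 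You even make explicit the bookkeeping (nonnegativity of the integrand off $\omega_s$, Lemma \ref{mono_2} giving a lower bound rather than an identity) that the paper leaves implicit, so no gaps.
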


\begin{proof}
First, let $\omega_s\subseteq\mathcal{D}$ and let $g_j\in\mathbb{R}^M$, $j\in \lbrace 1, 2,\ldots ,N\rbrace$. In a body with interior Lam\'e parameters $(\lambda_{\mathcal{B}_{\min}},\mu_{\mathcal{B}_{\min}})$, let $u_{g_j}$ be the corresponding  displacements resulting from applying the boundary load $g_j$ to the $j$-th patch.
\begin{align*}
&g_j^T\left( {\boldsymbol\Lambda}(\lambda_{\mathcal{B}_{\min}},\mu_{\mathcal{B}_{\min}})-{\boldsymbol\Lambda}(\lambda,\mu)\right)g_j\\
&\geq \int_{\Omega}\dfrac{\lambda_{\mathcal{B}_{\min}}}{\lambda} (\lambda-\lambda_{\mathcal{B}_{\min}})\nabla \cdot u_{g_j} \nabla \cdot u_{g_j}
+ 2\dfrac{\mu_{\mathcal{B}_{\min}}}{\mu}(\mu-\mu_{\mathcal{B}_{\min}})\hat{\nabla}u_{g_j} : \hat{\nabla}u_{g_j}\,dx
\end{align*}
\noindent
and since $\omega_s\subseteq \mathcal{D}$ implies $\lambda-\lambda_{\mathcal{B}_{\min}}\geq (c^
{\lambda}+\lambda_0\epsilon^\lambda)\chi_{\omega_s}$ and $\mu-\mu_{\mathcal{B}_{\min}}\geq (c^{\mu}+\mu_0\epsilon^\mu)\chi_{\omega_s}$, it follows that
\begin{align*}
{\boldsymbol\Lambda}(\lambda_{\mathcal{B}_{\min}},\mu_{\mathcal{B}_{\min}})-{\boldsymbol\Lambda}(\lambda,\mu) \geq -{\boldsymbol\Lambda}^\prime(\lambda_{\mathcal{B}_{\min}},\mu_{\mathcal{B}_{\min}})(\kappa^{\lambda}\chi_{\omega_s},\kappa^{\mu}\chi_{\omega_s}).
\end{align*}
\noindent
Hence, we obtain that
\begin{align*}
&{\bf T}_s+\delta {\bf I}\\
&={\boldsymbol\Lambda}(\lambda_{\mathcal{B}_{\min}},\mu_{\mathcal{B}_{\min}})+{\boldsymbol\Lambda}^\prime(\lambda_{\mathcal{B}_{\min}},\mu_{\mathcal{B}_{\min}})(\kappa^{\lambda}\chi_{\omega_s},\kappa^{\mu}\chi_{\omega_s})+\delta {\bf I}\\
&\geq {\boldsymbol\Lambda}(\lambda,\mu) +\delta {\bf I}\\
&\geq {\boldsymbol\Lambda}^\delta(\lambda,\mu).
\end{align*}
\noindent
For the proof of (ii),  the reader is referred to the corresponding proof of Theorem \ref{resol_stand_test_strong}.
\end{proof}
\noindent
Finally, we present the theorem for the special case (\ref{weak_param}). 

\begin{theorem}
The reconstruction of Algorithm \ref{alg_lin_test_weak} fulfills the resolution guarantee if
\begin{align*}
\nu > 2\delta >0
\end{align*}
\noindent
with 
\begin{align*}
\nu:=\min_{s=1,\ldots,N}\left(\max\left(\mathrm{eig}\left[{\bf T}_s-{\boldsymbol\Lambda}(\lambda_{\mathcal{B}_{\min}},\mu_{\mathcal{B}_{\min}})\right]\right)\right).
\end{align*}
\end{theorem}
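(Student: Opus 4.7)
The plan is to mirror the proof of Theorem~\ref{cal_max}, but with all monotonicity signs reversed, since we are now in the weak regime where the inclusion is softer than the background and $\kappa^\lambda,\kappa^\mu<0$. Part~(ii) of Definition~\ref{resolution} will reduce to essentially the same contradiction argument used in Theorem~\ref{resol_stand_test_strong}; the real work is in part~(i).

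For part~(i), fix $\omega_s\subseteq\mathcal{D}$. It suffices to prove the noise-free estimate ${\bf T}_s\leq {\boldsymbol\Lambda}(\lambda,\mu)$, since the noise bound $\|{\boldsymbol\Lambda}^\delta(\lambda,\mu)-{\boldsymbol\Lambda}(\lambda,\mu)\|\leq\delta$ then yields ${\bf T}_s-\delta{\bf I}\leq{\boldsymbol\Lambda}^\delta(\lambda,\mu)$. Unfolding ${\bf T}_s$ through the bilinear form of $\Lambda^\prime$ and writing $u_{\max}^g:=u^g_{(\lambda_{\mathcal{B}_{\max}},\mu_{\mathcal{B}_{\max}})}$, this reduces to
\begin{align*}
&\langle g,{\boldsymbol\Lambda}(\lambda_{\mathcal{B}_{\max}},\mu_{\mathcal{B}_{\max}})g\rangle-\langle g,{\boldsymbol\Lambda}(\lambda,\mu)g\rangle\\
&\qquad\leq \int_\Omega 2\kappa^\mu\chi_{\omega_s}\hat{\nabla}u_{\max}^g:\hat{\nabla}u_{\max}^g+\kappa^\lambda\chi_{\omega_s}\nabla\cdot u_{\max}^g\,\nabla\cdot u_{\max}^g\,dx
\end{align*}
for every patch load $g$. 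I would bound the left-hand side from above using the upper estimate in Lemma~\ref{mono} with $(\lambda_2,\mu_2)=(\lambda_{\mathcal{B}_{\max}},\mu_{\mathcal{B}_{\max}})$ and $(\lambda_1,\mu_1)=(\lambda,\mu)$; this is the natural choice because the resulting integrand is weighted precisely by $u_{\max}^g$, and because the coefficients (\ref{kappa_l_weak}) carry no ratio factor $\lambda_{\mathcal{B}_{\max}}/\lambda_{\max}$ that would otherwise demand the refined Lemma~\ref{mono_2}. What remains is the pointwise verification that $\mu-\mu_{\mathcal{B}_{\max}}\leq \kappa^\mu\chi_{\omega_s}$ and $\lambda-\lambda_{\mathcal{B}_{\max}}\leq \kappa^\lambda\chi_{\omega_s}$ on all of $\Omega$: inside $\omega_s$ this follows from assumption~(c) combined with the definitions in Subsection~\ref{assump}, while outside $\omega_s$ it collapses to $\mu\leq\mu_{\mathcal{B}_{\max}}$ (and analogously for $\lambda$), which holds by assumption~(b) on $\Omega\setminus\mathcal{D}$ and by (\ref{weak_param}) on any remaining portion $\mathcal{D}\setminus\omega_s$.

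For part~(ii), assume $\mathcal{D}=\emptyset$ but some $\omega_s$ is marked. Then $(\lambda,\mu)\geq(\lambda_{\mathcal{B}_{\min}},\mu_{\mathcal{B}_{\min}})$ everywhere by assumption~(b), so Corollary~\ref{monotonicity} yields ${\boldsymbol\Lambda}(\lambda,\mu)\leq{\boldsymbol\Lambda}(\lambda_{\mathcal{B}_{\min}},\mu_{\mathcal{B}_{\min}})$. Chaining the marking condition, the noise bound and this monotonicity gives
\begin{align*}
{\bf T}_s-{\boldsymbol\Lambda}(\lambda_{\mathcal{B}_{\min}},\mu_{\mathcal{B}_{\min}})\leq 2\delta\,{\bf I},
\end{align*}
so the largest eigenvalue of ${\bf T}_s-{\boldsymbol\Lambda}(\lambda_{\mathcal{B}_{\min}},\mu_{\mathcal{B}_{\min}})$ is bounded by $2\delta$. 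Taking the minimum over $s$ produces $\nu\leq 2\delta$, contradicting the hypothesis.

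The main obstacle is the sign bookkeeping in part~(i): because $\kappa^\lambda,\kappa^\mu$ are negative and the pointwise differences $\mu-\mu_{\mathcal{B}_{\max}}$, $\lambda-\lambda_{\mathcal{B}_{\max}}$ are non-positive, one must carefully select the upper (rather than the lower) estimate of Lemma~\ref{mono}, and verify the pointwise comparisons with the correct orientation both inside and outside $\omega_s$; a wrong choice would produce an inequality in the wrong direction and the argument breaks.
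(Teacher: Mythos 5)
Your proposal is correct and follows essentially the same route as the paper: part (i) rests on the upper estimate of Lemma~\ref{mono} anchored at $(\lambda_{\mathcal{B}_{\max}},\mu_{\mathcal{B}_{\max}})$ together with the pointwise bounds $\lambda-\lambda_{\mathcal{B}_{\max}}\leq\kappa^\lambda\chi_{\omega_s}$, $\mu-\mu_{\mathcal{B}_{\max}}\leq\kappa^\mu\chi_{\omega_s}$ (the paper states the intermediate integral over $\mathcal{D}$ and then uses $\kappa^\lambda,\kappa^\mu<0$ with $\omega_s\subseteq\mathcal{D}$, which is the same computation), and part (ii) is the same contradiction argument via the noise bound and monotonicity at $(\lambda_{\mathcal{B}_{\min}},\mu_{\mathcal{B}_{\min}})$. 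Your remark that the plain Lemma~\ref{mono} suffices here (no ratio factors, hence no need for Lemma~\ref{mono_2}) correctly reflects the definition (\ref{kappa_l_weak}) of $\kappa^\lambda,\kappa^\mu$ in Algorithm~\ref{alg_lin_test_weak}.
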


\begin{proof}
First, let $\omega_s\subseteq \mathcal{D}$ and let $g_j\in\mathbb{R}^M$, $j\in \lbrace 1, 2,\ldots ,N\rbrace$. In a body with interior Lam\'e parameters $(\lambda_{\mathcal{B}_{\max}},\mu_{\mathcal{B}_{\max}})$, let $u_{g_j}$ be the corresponding  displacements resulting from applying the boundary load $g_j$ to the $j$-th patch. As in the proof of the theorem before, we obtain
\begin{align*}
g_j^T\left({\boldsymbol\Lambda}(\lambda_{\mathcal{B}_{\max}},\mu_{\mathcal{B}_{\max}})-\delta {\bf I} -{\boldsymbol\Lambda}^\delta(\lambda,\mu)\right)g_j
\leq \int_{\mathcal{D}}\kappa^{\lambda}\nabla\cdot u_{g_j} \nabla\cdot u_{g_j} + 2\kappa^{\mu}\hat{\nabla} u_{g_j} : \hat{\nabla} u_{g_j}\,dx.
\end{align*}
\noindent
This yields
\begin{align*}
{\bf T}_s-\delta {\bf I} \leq {\boldsymbol\Lambda}^\delta(\lambda,\mu).
\end{align*}
\noindent
Hence, $\omega_s$ will be marked, which shows that part (i) of the resolution guarantee.  The second part is analogue to the proof of part (ii) from the theorem before.
\end{proof}

\subsection{Numerical simulations}

We examine an elastic body (Makrolon) with possible inclusions (aluminium), where the corresponding Lam\'e parameters are given in Table \ref{lame_parameter_mono}.

\begin{table} [H]
 \begin{center}
 \begin{tabular}{ |c|c| c |}  
\hline
 material & $\lambda_i$ & $\mu_i$ \\
  \hline
$i=0$: background material (Makrolon) &  $2.8910\cdot 10^9$   &  $1.1808\cdot 10^9$   \\
 \hline
$i=\mathcal{D}$: inclusion material (aluminium) &  $5.1084\cdot 10^{10}$ &  $2.6316\cdot 10^{10}$  \\
\hline
\end{tabular}
\end{center}
\caption{Lam\'e parameters of the test material in [Pa] (see \cite{Eberle_Experimental}).}
\label{lame_parameter_mono}
\end{table}

We consider two different settings of test cubes ($5\times 5\times 5$ and $10\times 10\times 10$)  as well as two configurations of Neumann patches. Specifically, we apply boundary forces on $5$ faces of the elastic body with either $5\times 5$ or $10\times 10$ Neumann patches on each face. Figure \ref{testcubes} shows exemplary the setting with $5\times 5\times 5$ testcubes and $125$ Neumann patches.

\begin{figure}[H]
\centering 
\includegraphics[width=0.5\textwidth]{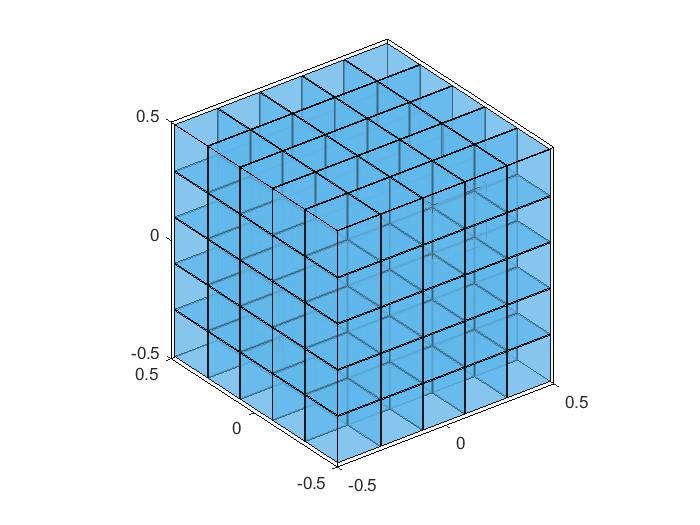}
\caption{$125$ testcubes and $125$ Neumann patches.}\label{testcubes}
\end{figure}

Our simulations are based on noisy data. We assume that we are given a noise level $\eta\geq 0$ and set
\begin{align*}
\delta=\eta\cdot \Vert \boldsymbol{\Lambda}(\lambda,\mu)\Vert_F.
\end{align*}
\noindent
In addition, we define $\boldsymbol{\Lambda}^\delta(\lambda,\mu)$ as
\begin{align*}
\boldsymbol{\Lambda}^\delta(\lambda,\mu)=\boldsymbol{\Lambda}(\lambda,\mu)+\delta \overline{{\bf E}},
\end{align*}
\noindent
with $\overline{{\bf E}}={\bf E}/ \Vert{\bf E}\Vert_F$, where ${\bf E}$ consists of $M\times M$ random uniformly distributed values in $[-1,1]$.

\subsubsection{Example 1}

For our simulations we calculate the maximal noise $\eta$ perturbing $\boldsymbol{\Lambda}(\lambda,\mu)$ for different background error parameters $\epsilon^\lambda$ and $\epsilon^\mu$
(see Figure \ref{5_N_5_T_2D}), based on Theorem \ref{cal_max}.

\begin{figure}[H]
\centering 
\includegraphics[width=0.45\textwidth]{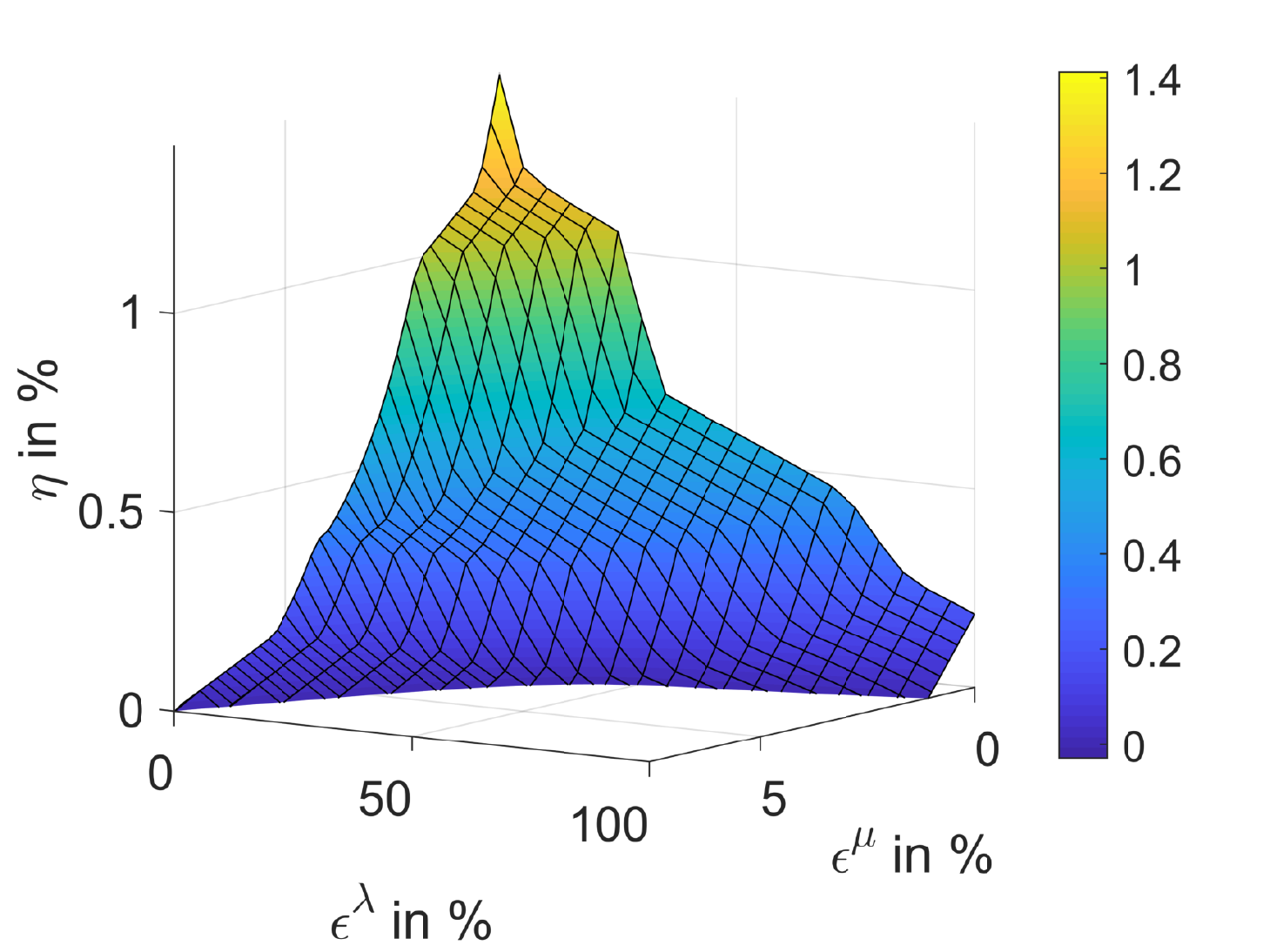}
\includegraphics[width=0.45\textwidth]{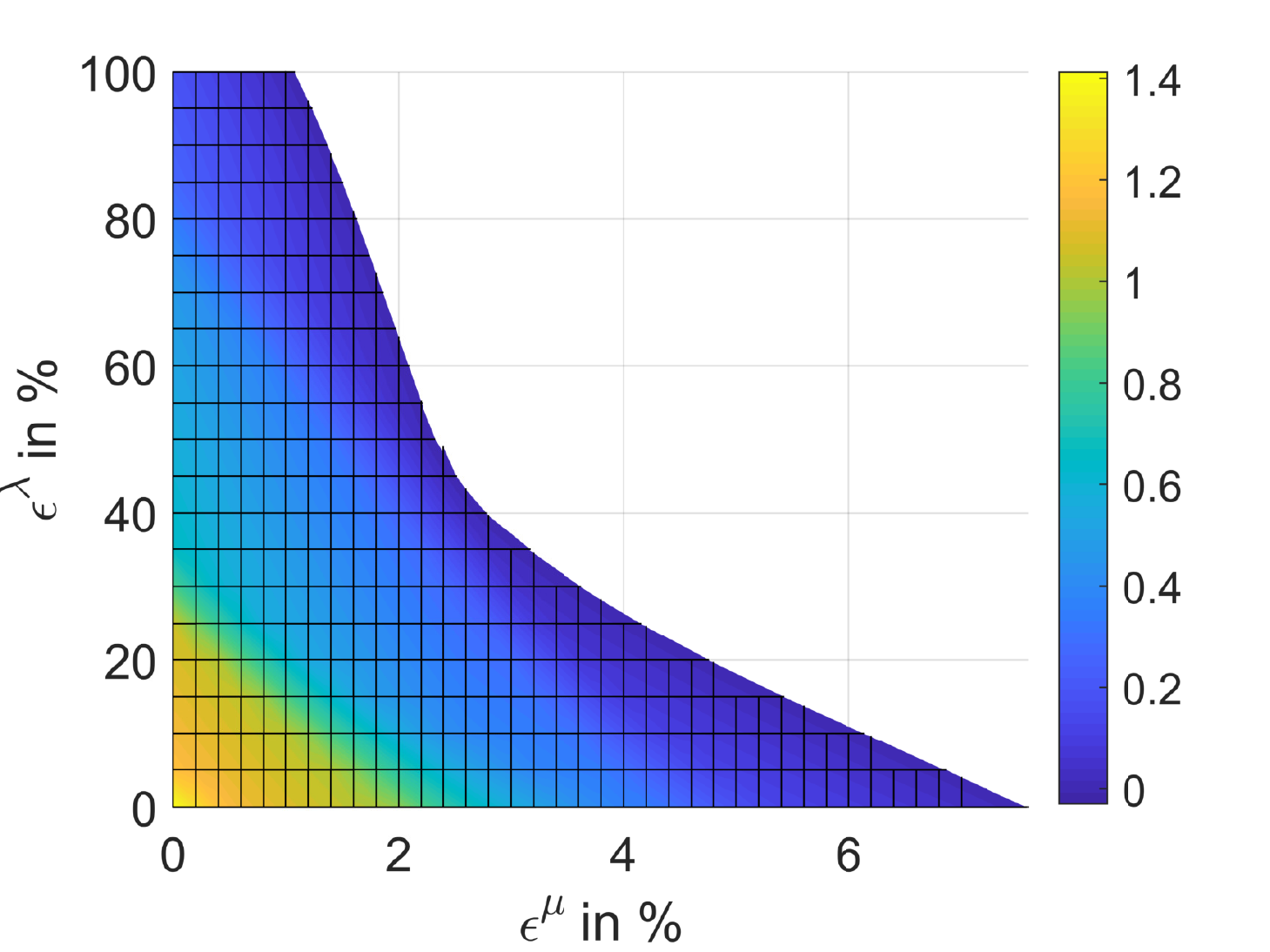}\\
\includegraphics[width=0.45\textwidth]{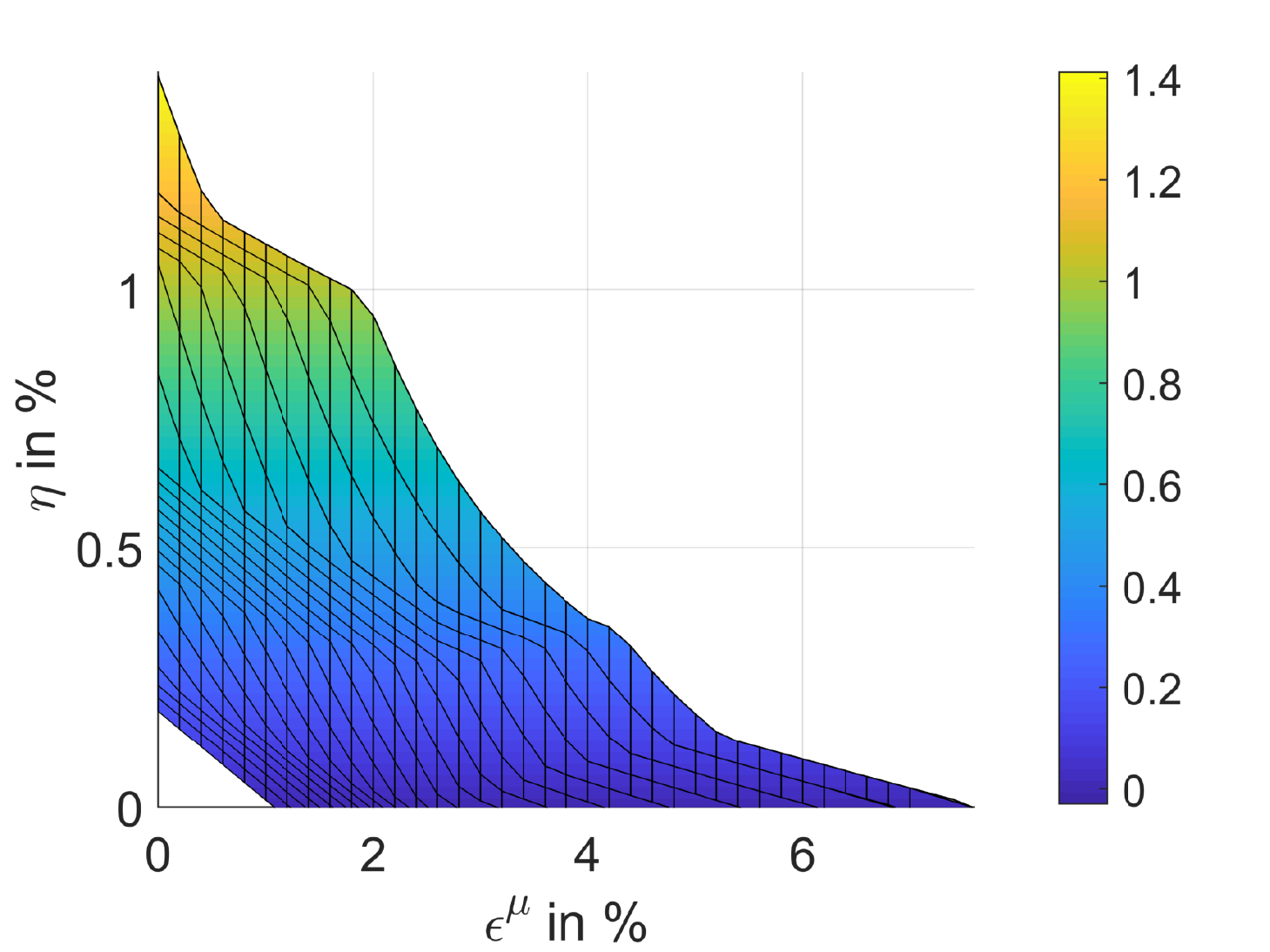}
\includegraphics[width=0.45\textwidth]{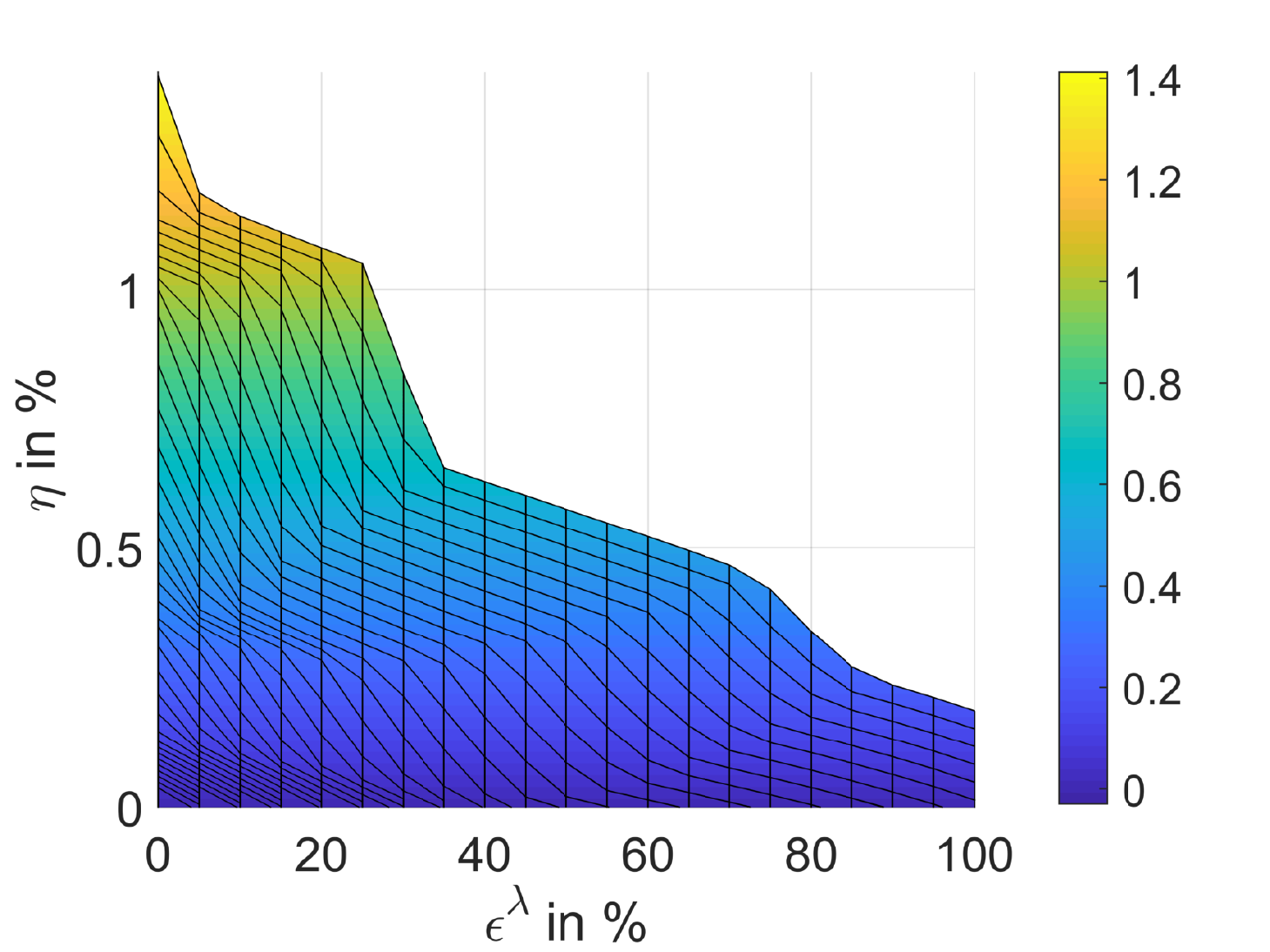}

\caption{Relation between $\eta$ and $\epsilon^\lambda$ as well as $\epsilon^\mu$ for $125$ testcubes and $125$ Neumann patches shown from different angles.}\label{5_N_5_T_2D}
\end{figure}
\noindent
\\
Figure \ref{5_N_5_T_2D} tells us that the maximal $\eta$ of approximately $1.413\%$ is reached for $\epsilon^\lambda=0=\epsilon^\mu$.  
The background error $\epsilon^\lambda$ does not show much impact. Even for $\epsilon^\lambda=100\%$, we obtain a resolution guarantee. The maximal background error w.r.t. $\mu$ with $\epsilon^\lambda=0\%$ is $\epsilon^\mu\approx 7.692\%$ at $\eta=0\%$.

\begin{remark}
All in all, we conclude that the resolution guarantees depend heavily on the Lam\'e parameter $\mu$
and only marginally on $\lambda$.
\end{remark}

\subsubsection{Example 2}

Based on the result of Example 1, we change our configuration and set $\epsilon^\lambda=0\%$ for a better comparability.
The results are shown in Figure \ref{5_N_5_T}-\ref{10_N_10_T}, where we analyse the relation of $\epsilon^\mu$ ($x$-axis) and $\eta$ ($y$-axis) with both values given in $\%$. The considered numbers of testcubes and Neumann patches are given in the caption of the figure. As expected, the smaller the background error $\epsilon^\mu$ can be estimated, the more noise on the data can be handled.
\\
\\
In Figure \ref{5_N_5_T}, we deal with $5\times 5\times 5=125$ testcubes and $125$ Neumann patches as shown in Figure \ref{testcubes}. We can observe an approximately linear connection between $\epsilon^\mu$ and $\eta$ showing that a resolution guarantee is given for all pairs $(\epsilon^\mu,\eta)$ on the black line and the gray area below for
$\epsilon^\lambda=0\%$.

\begin{figure}[H]
\centering 
\includegraphics[width=0.55\textwidth]{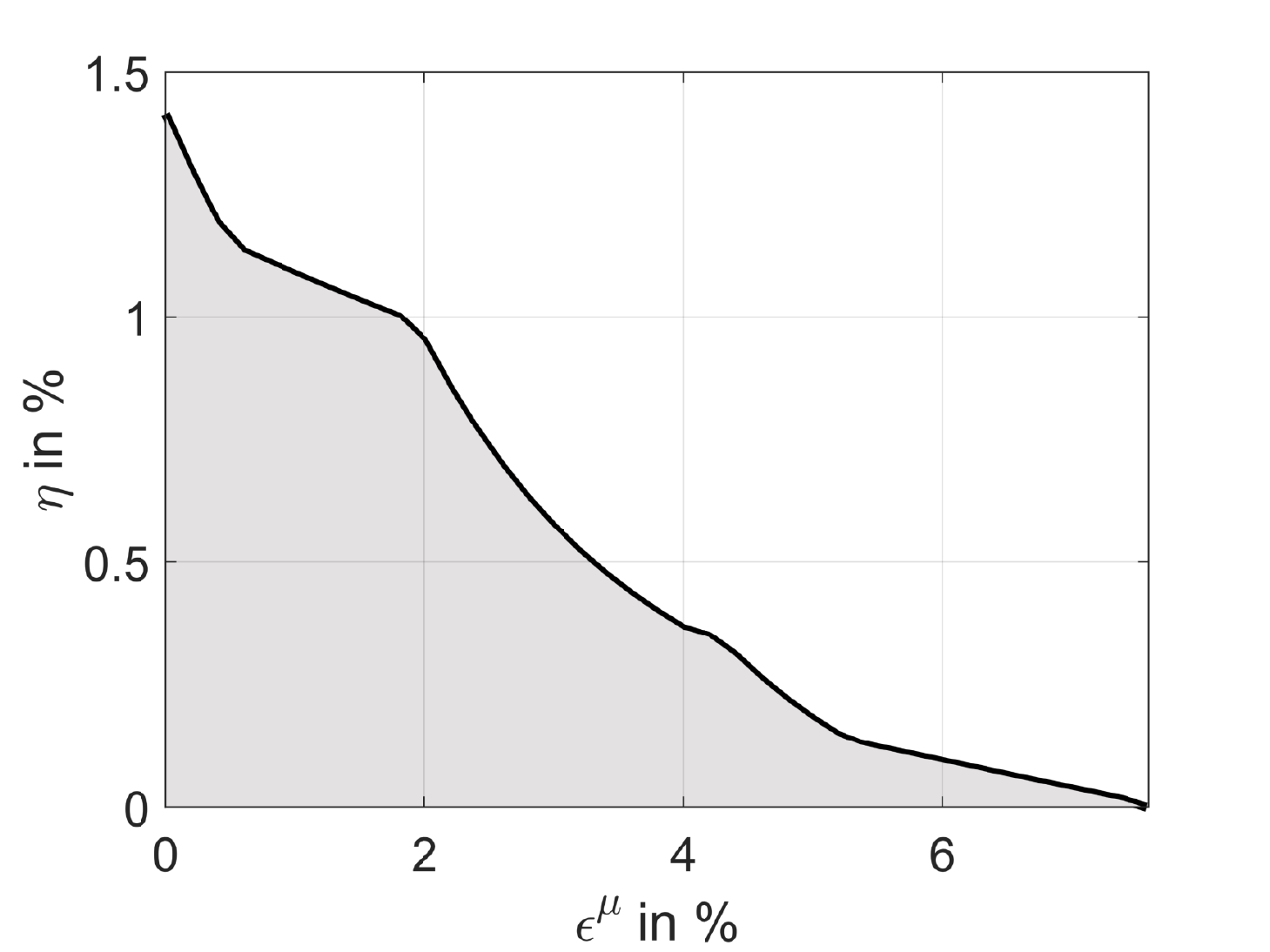}
\caption{Relation between $\eta$ and $\epsilon^\mu$ for $125$ testcubes and $125$ Neumann patches for $\epsilon^\lambda=0\%$.}\label{5_N_5_T}
\end{figure}
\noindent
In Figure \ref{5_N_10_T}, we change our setting and increase the number of testcubes to $10\times 10\times 10=1000$, while simulating the reconstruction for the same $125$ Neumann patches.
\\
\\
If we now compare Figure \ref{5_N_5_T} and \ref{5_N_10_T}, we see that for more testcubes, our method is less stable w.r.t. both $\epsilon^\mu$ and $\eta$. 
This behaviour is expected since smaller pixels are to be reconstructed with the same amount of data from the Neumann patches.
Nevertheless, we achieve a resolution guarantee, if the pair $\eta$, $\epsilon^\mu$ is located on the black line or the gray area below. The maximal noise on the date is given by $\eta\approx 0.200\%$ for $\epsilon^\mu=\epsilon^\lambda=0\%$ and the maximal background noise for $\mu$ is given by $\epsilon^\mu\approx 0.927\%$ for $\epsilon^\lambda=\eta=0\%$.

\begin{figure}[H]
\centering 
\includegraphics[width=0.55\textwidth]{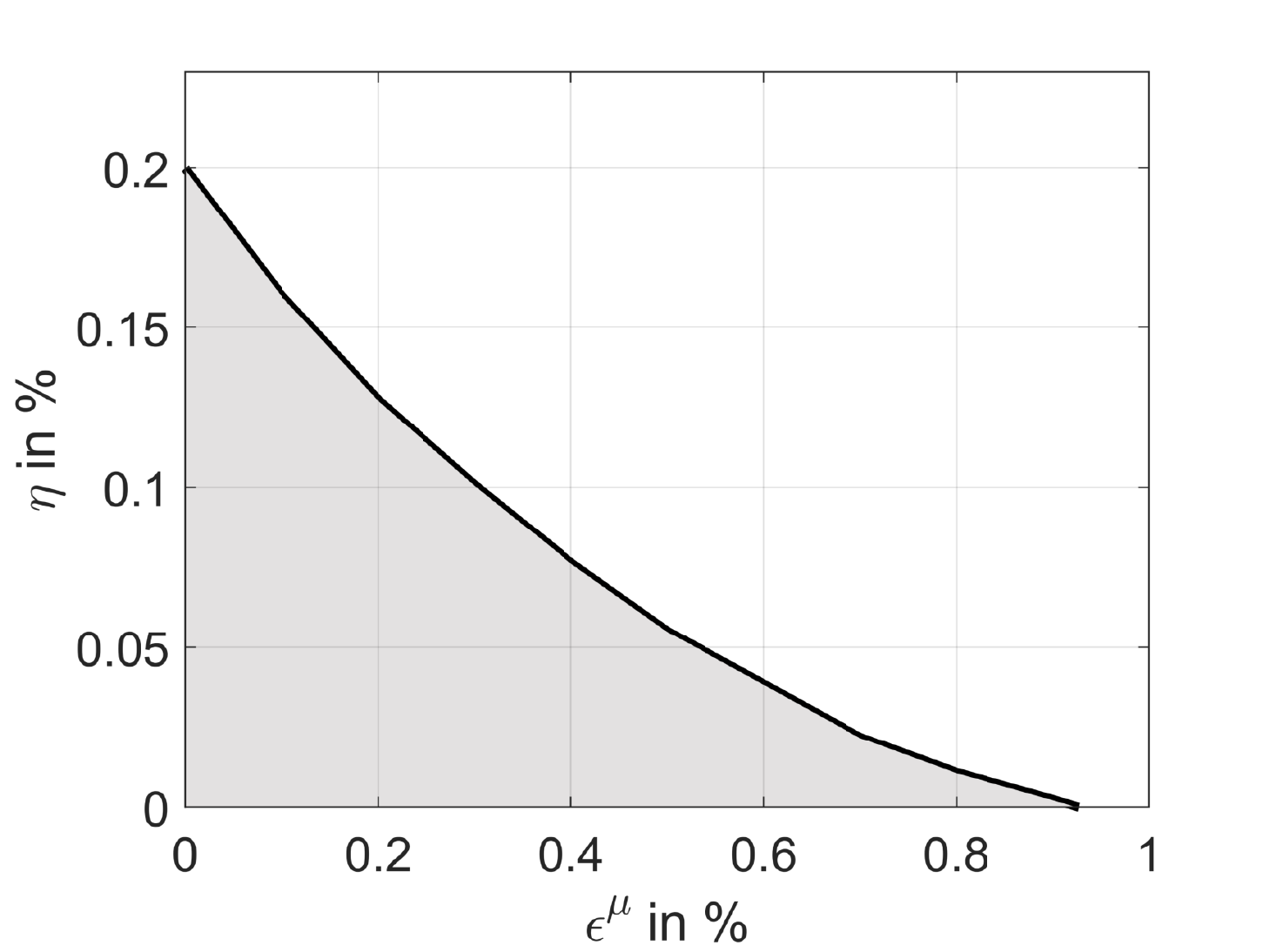}
\caption{Relation between $\eta$ and $\epsilon^\mu$ for $1000$ testcubes and $125$ Neumann patches for $\epsilon^\lambda=0\%$.}\label{5_N_10_T}
\end{figure}
\noindent
Increasing the resolution by using more Neumann patches is also possible as indicated in Figure \ref{10_N_10_T}. This figure shows the set-up with $1000$ testcubes, the same as in Figure \ref{5_N_10_T}, but with $500$ Neumann patches instead of $125$. This increases both the stability regarding $\eta$ as well es $\epsilon^\mu$, however, the improvement is small. 
In fact, the maximal noise on the date is given by $\eta\approx 0.213\%$ for $\epsilon^\mu=\epsilon^\lambda=0\%$ and the maximal background noise for $\mu$ is given by $\epsilon^\mu\approx 0.942\%$ for $\epsilon^\lambda=\eta=0\%$.
For a better resolution guarantee, even more Neumann patches have to be used, but the numerical effort to do that will increases heavily.

\begin{figure}[H]
\centering 
\includegraphics[width=0.55\textwidth]{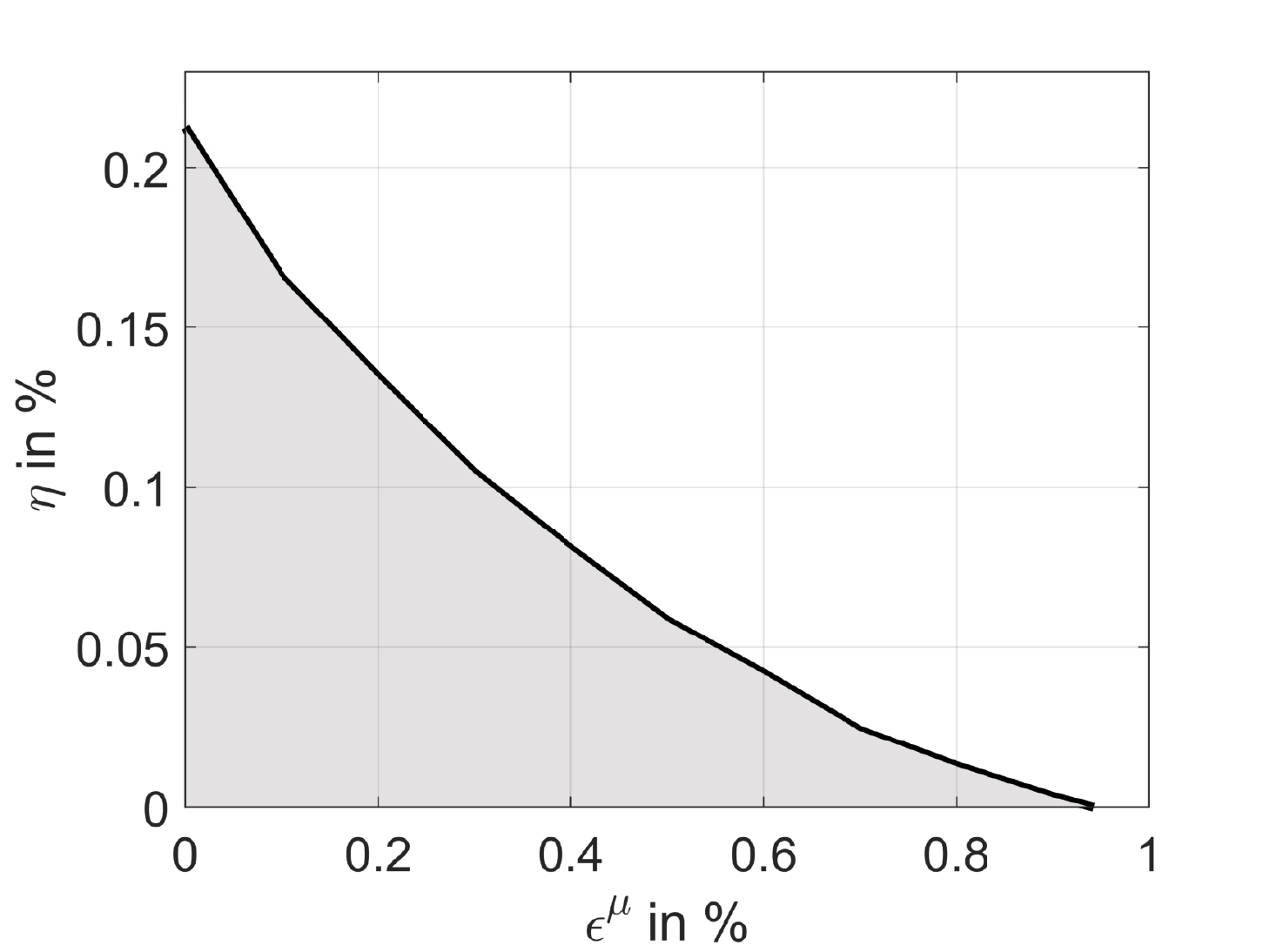}
\caption{Relation between $\eta$ and $\epsilon^\mu$ for $1000$ testcubes and $500$ Neumann patches for $\epsilon^\lambda=0\%$.}\label{10_N_10_T}
\end{figure}

%%%%%%%%%%%%%%%%%%%%%%%%%%%%%%%%%%%%%%%%%%%%%%%%%%%%%%%%%%%%%%%%%%%%%%%%%%%%%%%%%%%%%%%%%%%%%%%%%%%

\section{Conclusion and outlook}

Our main focus was the construction of conditions under which a resolution for a given partition can be achieved. 
Thus, our formulation takes both the background error as well as the measurement noise into account. The numerical simulations showed that for more testcubes our method is less stable  w.r.t. $\epsilon^\lambda, \epsilon^\mu$ and $\eta$. This behaviour is expected since more as well as smaller pixels are to be reconstructed with the same amount of data from the Neumann patches.
As a main
result, the resolution guarantees depend heavily on the Lam\'e parameter $\mu$
and only marginally on $\lambda$.
Finally, we want to remark that the algorithm is more stable w.r.t. $\epsilon^\lambda, \epsilon^\mu$ as w.r.t. $\eta$. 
All in all, our results are of special importance, when considering simulations based on real data, e.g., in \cite{Eberle_Experimental} or in the framework of monotonicity-based regularization (see, e.g. \cite{Eberle_Mon_Bas_Reg}).

\noindent
\\
{\bf Acknowledgements}
\\
The first author thanks the German Research Foundation (DFG) for funding the project "Inclusion Reconstruction with Monotonicity-based Methods for the Elasto-oscillatory Wave Equation" (reference number $499303971$).

%%%%%%%%%%%%%%%%%%%%%%%%%%%%%%%%%%%%%%%%%%%%%%%%%%%%%%%%%%%%%%%%%%%%%%%%%%%%%%%%%%%%%%%%%%%%%%%%%%%

\bibliographystyle{plain}
\bibliography{references}

\begin{thebibliography}{10}

\bibitem{Andrieux}
S~Andrieux, AB~Abda, and HD~Bui.
\newblock Reciprocity principle and crack identification.
\newblock {\em Inverse Problems}, 15:59--65, 1999.

\bibitem{beretta2014lipschitz}
E~Beretta, E~Francini, A~Morassi, E~Rosset, and S~Vessella.
\newblock Lipschitz continuous dependence of piecewise constant {L}am{\'e}
  coefficients from boundary data: the case of non-flat interfaces.
\newblock {\em Inverse Problems}, 30(12):125005, 2014.

\bibitem{beretta2014uniqueness}
E~Beretta, E~Francini, and S~Vessella.
\newblock Uniqueness and {L}ipschitz stability for the identification of
  {L}am{\'e} parameters from boundary measurements.
\newblock {\em Inverse Problems \& Imaging}, 8(3):611--644, 2014.

\bibitem{Ciarlet}
PG~Ciarlet.
\newblock {\em The finite element method for elliptic problems}.
\newblock North Holland, 1978.

\bibitem{Eberle_mon_test}
S~Eberle and B~Harrach.
\newblock Shape reconstruction in linear elasticity: Standard and linearized
  monotonicity method.
\newblock {\em Inverse Problems}, 37(4):045006, 2021.

\bibitem{Eberle_Mon_Bas_Reg}
S~Eberle and B~Harrach.
\newblock Monotonicity-based regularization for shape reconstruction in linear
  elasticity.
\newblock {\em Comput. Mech.}, 2022.

\bibitem{Eberle_Monotonicity}
S~Eberle, B~Harrach, H~Meftahi, and T~Rezgui.
\newblock Lipschitz stability estimate and reconstruction of {L}am\'e
  parameters in linear elasticity.
\newblock {\em Inverse Problems in Science and Engineering}, 29(3):396--417,
  2021.

\bibitem{Eberle_Experimental}
S~Eberle and J~Moll.
\newblock Experimental detection and shape reconstruction of inclusions in
  elastic bodies via a monotonicity method.
\newblock {\em Int J Solids Struct}, 233:111169, 2021.

\bibitem{eskin2002inverse}
G~Eskin and J~Ralston.
\newblock On the inverse boundary value problem for linear isotropic
  elasticity.
\newblock {\em Inverse Problems}, 18(3):907, 2002.

\bibitem{Ferrier}
R~Ferrier, ML~Kadri, and P~Gosselet.
\newblock Planar crack identification in 3{D} linear elasticity by the
  reciprocity gap method.
\newblock {\em Computer Methods in Applied Mechanics and Engineering},
  355:193--215, 2019.

\bibitem{gebauer2008localized}
B~Gebauer.
\newblock Localized potentials in electrical impedance tomography.
\newblock {\em Inverse Probl. Imaging}, 2(2):251--269, 2008.

\bibitem{harrach2009uniqueness}
B~Harrach.
\newblock On uniqueness in diffuse optical tomography.
\newblock {\em Inverse problems}, 25(5):055010, 2009.

\bibitem{harrach2012simultaneous}
B~Harrach.
\newblock Simultaneous determination of the diffusion and absorption
  coefficient from boundary data.
\newblock {\em Inverse Probl. Imaging}, 6(4):663--679, 2012.

\bibitem{harrach2018localizing}
B~Harrach, YH~Lin, and H~Liu.
\newblock On localizing and concentrating electromagnetic fields.
\newblock {\em SIAM J. Appl. Math}, 78(5):2558--2574, 2018.

\bibitem{harrach2013monotonicity}
B~Harrach and M~Ullrich.
\newblock Monotonicity-based shape reconstruction in electrical impedance
  tomography.
\newblock {\em SIAM Journal on Mathematical Analysis}, 45(6):3382--3403, 2013.

\bibitem{Harrach_Resol}
B~Harrach and M~Ullrich.
\newblock Resolution guarantees in electrical impedance tomography.
\newblock {\em IEEE Trans. Med. Imaging}, 34(7):1513--1521, 2015.

\bibitem{Hubmer}
S~Hubmer, E~Sherina, A~Neubauer, and O~Scherzer.
\newblock Lam\'{e} parameter estimation from static displacement field
  measurements in the framework of nonlinear inverse problems.
\newblock {\em SIAM Journal on Imaging Sciences}, 11(2):1268--1293, 2018.

\bibitem{ikehata1990inversion}
M~Ikehata.
\newblock Inversion formulas for the linearized problem for an inverse boundary
  value problem in elastic prospection.
\newblock {\em SIAM Journal on Applied Mathematics}, 50(6):1635--1644, 1990.

\bibitem{Ikehata1998}
M~Ikehata.
\newblock Size estimation of inclusion.
\newblock {\em Journal of Inverse and Ill-Posed Problems}, 6(2):127--140, 1998.

\bibitem{Ikehata2002}
M~Ikehata.
\newblock Reconstruction of inclusion from boundary measurements.
\newblock {\em Journal of Inverse and Ill-posed Problems}, 10(1):37--66, 2002.

\bibitem{Ikehata2006}
M~Ikehata.
\newblock Stroh eigenvalues and identification of discontinuity in an
  anisotropic elastic material.
\newblock {\em Contemporary Mathematics}, 408:231--247, 2006.

\bibitem{Ikehata1999}
M~Ikehata, G~Nakamura, and K~Tanuma.
\newblock Identification of the shape of the inclusion in the anisotropic
  elastic body.
\newblock {\em Applicable Analysis}, 72(1-2):17--26, 1999.

\bibitem{imanuvilov2011reconstruction}
OY~Imanuvilov and M~Yamamoto.
\newblock On reconstruction of {L}am{\'e} coefficients from partial {C}auchy
  data.
\newblock {\em Journal of Inverse and Ill-posed Problems}, 19(6):881--891,
  2011.

\bibitem{Jadamba}
B~Jadamba, AA~Khan, and F~Raciti.
\newblock On the inverse problem of identifying {L}am\'{e} coefficients
  inlinear elasticity.
\newblock {\em Computers and Mathematics with Applications}, 56:431--443, 2008.

\bibitem{lin2017boundary}
YH~Lin and G~Nakamura.
\newblock Boundary determination of the {L}am{\'e} moduli for the isotropic
  elasticity system.
\newblock {\em Inverse Problems}, 33(12):125004, 2017.

\bibitem{Marin}
L~Marin and D~Lesnic.
\newblock Regularized boundary element solution for an inverse boundary value
  problem in linear elasticity.
\newblock {\em Communications in Numerical Methods in Engineering},
  18:817--825, 2002.

\bibitem{Marin_2005}
L~Marin and D~Lesnic.
\newblock Boundary element-{L}andweber method for the {C}auchy problem in
  linear elasticity.
\newblock {\em IMA Journal of Applied Mathematics}, 70(2):323--340, 2005.

\bibitem{nakamura1999layer}
G~Nakamura, K~Tanuma, and G~Uhlmann.
\newblock Layer stripping for a transversely isotropic elastic medium.
\newblock {\em SIAM Journal on Applied Mathematics}, 59(5):1879--1891, 1999.

\bibitem{nakamura1993identification}
G~Nakamura and G~Uhlmann.
\newblock Identification of {L}am{\'e} parameters by boundary measurements.
\newblock {\em American Journal of Mathematics}, pages 1161--1187, 1993.

\bibitem{nakamura1995inverse}
G~Nakamura and G~Uhlmann.
\newblock Inverse problems at the boundary for an elastic medium.
\newblock {\em SIAM journal on mathematical analysis}, 26(2):263--279, 1995.

\bibitem{nakamura2003global}
G~Nakamura and G~Uhlmann.
\newblock Global uniqueness for an inverse boundary value problem arising in
  elasticity.
\newblock {\em Inventiones mathematicae}, 152(1):205--207, 2003.

\bibitem{Oberai_2004}
AA~Oberai, NH~Gokhale, MM~Doyley, and JC~Bamber.
\newblock Evaluation of the adjoint equation based algorithm for elasticity
  imaging.
\newblock {\em Physics in Medicine and Biology}, 49(13):2955--2974, 2004.

\bibitem{Oberai_2003}
AA~Oberai, NH~Gokhale, and GR~Feijoo.
\newblock Solution of inverse problems in elasticity imaging using the adjoint
  method.
\newblock {\em Inverse Problems}, 19:297--313, 2003.

\bibitem{Seidl}
DT~Seidl, AA~Oberai, and PE~Barbone.
\newblock The coupled adjoint-state equation in forward and inverse linear
  elasticity: Incompressible plane stress.
\newblock {\em Computer Methods in Applied Mechanics and Engineering},
  357:112588, 2019.

\bibitem{Steinhorst}
P~Steinhorst and AM~S\"{a}ndig.
\newblock Reciprocity principle for the detection of planar cracks in
  anisotropic elastic material.
\newblock {\em Inverse Problems}, 29:085010, 2012.

\bibitem{Tamburrino06}
A~Tamburrino.
\newblock Monotonicity based imaging methods for elliptic and parabolic inverse
  problems.
\newblock {\em J. Inverse Ill-Posed Probl.}, 14(6):633--642, 2006.

\bibitem{tamburrino2002new}
A~Tamburrino and G~Rubinacci.
\newblock A new non-iterative inversion method for electrical resistance
  tomography.
\newblock {\em Inverse Problems}, 18(6):1809, 2002.

\end{thebibliography}
\end{document}